\documentclass[11pt]{amsart}
\usepackage{amscd,amssymb,longtable, rotating, lscape, graphicx}
\usepackage[matrix,arrow,curve]{xy}
\usepackage{supertabular}
\usepackage{setspace}

\sloppy
\pagestyle{plain}

\setlength{\textwidth}{16cm} \setlength{\textheight}{22cm}

\oddsidemargin=0mm
\evensidemargin=0mm \topmargin=0pt
\setlength{\parskip}{5 pt} \setlength{\parindent}{0 pt}

\pagenumbering{arabic}

\theoremstyle{definition}
\newtheorem{theorem}[equation]{Theorem}
\newtheorem*{maintheorem*}{ Main Theorem}
\newtheorem*{theorem*}{Theorem}
\newtheorem{lemma}[equation]{Lemma}

\newtheorem{conjecture}[equation]{Conjecture}

\newtheorem{definition}[equation]{Definition}

\newtheorem*{definition*}{Definition}

\theoremstyle{remark}
\newtheorem{remark}[equation]{Remark}

\makeatletter\@addtoreset{equation}{section}
\makeatother

\makeatletter\@addtoreset{equation}{section}
\makeatletter\@addtoreset{section}{part}

\makeatother

\usepackage{xcolor}

\newcommand{\vol}{\operatorname{vol}}
\newcommand{\ord}{\operatorname{ord}}

\allowdisplaybreaks

\author{Dasol Jeong}
\author{In-Kyun Kim}
\author{Jihun Park}
\author{Joonyeong Won}

\title{New Sasaki-Einstein  $5$-manifolds}

\begin{document}

\begin{abstract}
We prove that closed simply connected $5$-manifolds $2(S^2\times S^3)\# nM_2$ allow Sasaki-Einstein structures,
where $M_2$ is  the closed simply connected $5$-manifold with $\mathrm{H}_2(M_2,\mathbb{Z})=\mathbb{Z}/2\mathbb{Z}\oplus \mathbb{Z}/2\mathbb{Z}$, $nM_2$ is the $n$-fold connected sum of $M_2$, and $2(S^2\times S^3)$ is the two-fold connected sum of $S^2\times S^3$.
\end{abstract}

\subjclass[2010]{53C25,  32Q20, 14J45.}

\address{ \emph{Dasol Jeong}\newline \textnormal{Department of
Mathematics, POSTECH
\newline \medskip 77 Cheongam-ro, Nam-gu, Pohang, Gyeongbuk, 37673, Korea \newline
Center for Geometry and Physics, Institute for Basic Science
\newline
77 Cheongam-ro, Nam-gu, Pohang, Gyeongbuk,  37673, Korea \newline
\texttt{jdsleader@postech.ac.kr}}}

\address{ \emph{In-Kyun Kim}\newline \textnormal{Department of Mathematics, Yonsei University\newline
50 Yonsei-ro, Seodaemun-gu, Seoul, 03722,  Korea \newline
\texttt{soulcraw@gmail.com}}}

\address{ \emph{Jihun Park}\newline \textnormal{Center for Geometry and Physics, Institute for Basic Science
\newline \medskip 77 Cheongam-ro, Nam-gu, Pohang, Gyeongbuk, 37673, Korea \newline
Department of
Mathematics, POSTECH
\newline
77 Cheongam-ro, Nam-gu, Pohang, Gyeongbuk,  37673, Korea \newline
\texttt{wlog@postech.ac.kr}}}

\address{ \emph{Joonyeong Won}\newline \textnormal{Department of Mathematics, Ewha Womans University\newline
52 Ewhayeodae-gil, Seodaemun-gu, Seoul,
03760, Korea. \newline
\texttt{leonwon@kias.re.kr}}}

\maketitle

\section{Introduction}
 A Riemannian manifold $(M, g)$ is called Sasakian  if the cone metric $r^2g+ dr^2$ defines a K\"ahler metric on $M\times \mathbb{R}^+$. If the metric $g$ satisfies the Einstein condition, i.e., $\mathrm{Ric}_g=\lambda g$ for some constant $\lambda$, then the metric $g$ is called Einstein. 
 A numerous number of closed simply connected  Sasaki-Einstein manifolds, in particular $5$-manifolds, have been discovered based on the method  that was introduced by Kobayashi (\cite{Kob63}) and  developed by Boyer, Galicki, and Koll\'ar (\cite{BG00}, \cite{BGK05}, \cite{Ko05}). The upshot of their method  is briefly  presented in  \cite{Ko09} as follows. A quasi-regular Sasakian structure on a manifold $L$ can be written as the unit circle subbundle of a holomorphic Seifert $\mathbb{C}^*$-bundle over a complex algebraic orbifold $(S, \Delta)$, where $\Delta=\sum\left(1-\frac{1}{m_i}\right)D_i$, $m_i$'s are positive integers, and $D_i$'s are distinct irreducible divisors.
 A simply connected Sasakian manifold $L$ is Einstein if and only if $-(K_S+\Delta)$ is ample, the first Chern class of $c_1(L/S)$ is a rational multiple of $ -(K_S+\Delta)$, and there is an orbifold K\"ahler-Einstein metric on the orbifold $(S, \Delta)$.

Links of  quasi-homogeneous hypersurface singularities are Seifert circle bundles over the corresponding projective hypersurfaces in weighted projective spaces. For a brief explanation, we consider a quasi-smooth hypersurface  $X$ defined by a quasi-homogeneous polynomial $F(z_0,z_1,\ldots, z_n)$ in variables
$z_0, \ldots, z_n$ with weights $\mathrm{wt}(z_i)=a_i$ in a weighted projective space $\mathbb{P}(a_0, a_1, \ldots, a_n)$. The equation $F(z_0,z_1,\ldots, z_n)=0$ also defines a hypersurface $\widehat{X}$ in $\mathbb{C}^{n+1}$  that is  smooth outside the origin.  The link of $X$ is a smooth compact  manifold of real dimension $2n-1$ defined by the intersection 
\[L_X=S^{2n+1}\cap \widehat{X},\]
where $S^{2n+1}$ is the unit sphere centred at the origin in $\mathbb{C}^{n+1}$.
 Note that it is simply-connected if $n\geq 3$ (\cite[Theorem~5.2]{M68}).

Suppose that $m:=\mathrm{gcd}(a_1,\ldots, a_n)>1$ and $\mathrm{gcd}(a_0, a_1, \ldots, a_{i-1}, \widehat{a_i}, a_{i+1}, \ldots, a_n)=1$ for each  $i=1,\ldots, n.$  Set $b_0=a_0$ and $b_i=\frac{a_i}{m}$ for $i=1,\ldots, n.$     Then, the weighted projective space $\mathbb{P}(a_0, a_1, \ldots, a_n)$ is not well-formed, while the weighted projective space $\mathbb{P}(b_0, b_1, \ldots, b_n)$ is well-formed (see \cite[Definition~5.11]{IF00}).
There is a quasi-homogeneous polynomial $G(x_0,\ldots, x_n)$ in variables
$x_0, \ldots, x_n$ with weights $\mathrm{wt}(x_i)=b_i$ such that $F(z_0,z_1,\ldots, z_n)=G(z_0^m,z_1,\ldots, z_n).$ The equation $G(x_0,\ldots,x_n)=0$ defines a quasi-smooth hypersurface $Y$ in $\mathbb{P}(b_0, b_1, \ldots, b_n)$. 
We suppose that $\deg_{\mathbf{w}}(F)-\sum a_i<0$ and $Y$ is well-formed in $\mathbb{P}(b_0, b_1, \ldots, b_n)$  (see \cite[Definition~6.9]{IF00}).   Denote by $D$ the divisor on~$Y$ cut by $x_0=0$.  
We may consider the log pair $(Y,\frac{m-1}{m}D)$ as a Fano orbifold.
The method by Kobayashi has evolved into the following assertion through the works of Boyer, Galicki, and Koll\'ar

\begin{theorem}[{\cite[Theorem~2.1]{BG00}, \cite[Theorem~5]{Kob63}}]\label{theorem:BGK-method}
If $(Y,\frac{m-1}{m}D)$ allows an orbifold  K\"ahler-Einstein metric, then there is a Sasaki-Einstein metric on the link $L_X$ of~$X$.
\end{theorem}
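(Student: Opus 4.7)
The plan is to apply the Boyer--Galicki--Koll\'ar criterion recalled in the introduction: a simply connected Sasakian manifold $L$ is Einstein if and only if $-(K_S+\Delta)$ is ample, $c_1(L/S)$ is a rational multiple of $-(K_S+\Delta)$, and an orbifold K\"ahler--Einstein metric exists on $(S,\Delta)$. With $(S,\Delta)=(Y,\frac{m-1}{m}D)$ the third ingredient is the hypothesis, so the task reduces to exhibiting $L_X$ as the unit circle subbundle of a holomorphic Seifert $\mathbb{C}^*$-bundle over $(Y,\frac{m-1}{m}D)$ and verifying the ampleness and proportionality conditions; the Sasaki--Einstein metric is then produced from the K\"ahler--Einstein one by the orbifold version of Kobayashi's circle-bundle construction.

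First, I would construct the Seifert $\mathbb{C}^*$-bundle structure. The weighted $\mathbb{C}^*$-action on $\widehat{X}\subset\mathbb{C}^{n+1}$ given by $(z_0,\ldots,z_n)\mapsto(t^{a_0}z_0,\ldots,t^{a_n}z_n)$ restricts to an $S^1$-action on $L_X$. The substitution $x_0=z_0^m$, $x_i=z_i$ for $i\geq 1$, which intertwines $F$ and $G$, descends to an identification of the $\mathbb{C}^*$-quotient of $\widehat{X}$ with the well-formed variety $Y\subset\mathbb{P}(b_0,\ldots,b_n)$. Because $\gcd(a_1,\ldots,a_n)=m$, a generic point of $\{z_0=0\}\cap \widehat{X}$ has stabilizer $\mathbb{Z}/m$ in $\mathbb{C}^*$, while off of $\{z_0=0\}$ the assumption $\gcd(a_0,\ldots,\widehat{a_i},\ldots,a_n)=1$ for each $i\geq 1$ together with the well-formedness of $Y$ rules out any further codimension-one isotropy. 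Hence $L_X\to Y$ is a Seifert $\mathbb{C}^*$-bundle whose orbifold branch divisor is precisely $\frac{m-1}{m}D$.

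Next, I would verify ampleness and proportionality by a weighted-adjunction computation. On $Y\subset\mathbb{P}(b_0,\ldots,b_n)$ one has $K_Y=\mathcal{O}_Y(\deg_{\mathbf{w}}G-\sum_i b_i)|_Y$ and $D\sim \mathcal{O}_Y(b_0)|_Y$; combined with the relations $\deg_{\mathbf{w}}F=m\cdot\deg_{\mathbf{w}}G$ and $\sum_i a_i=b_0+m\sum_{i\geq 1}b_i$ this yields
\[
-K_Y-\tfrac{m-1}{m}D\;=\;\mathcal{O}_Y\!\Bigl(\tfrac{1}{m}\bigl(\textstyle\sum_i a_i-\deg_{\mathbf{w}}F\bigr)\Bigr),
\]
which is ample because $\deg_{\mathbf{w}}F-\sum_i a_i<0$ by hypothesis. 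Since $c_1(L_X/Y)$ agrees, up to a positive rational factor, with the orbifold hyperplane class $\mathcal{O}_Y(1)|_Y$, it is automatically a rational multiple of $-(K_Y+\frac{m-1}{m}D)$.

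The most delicate step is the orbifold bookkeeping in the quotient construction: one must justify why the passage from the non-well-formed $\mathbb{P}(a_0,\ldots,a_n)$ to the well-formed $\mathbb{P}(b_0,\ldots,b_n)$ converts the $\mathbb{Z}/m$ isotropy along $\{z_0=0\}$ into exactly the boundary coefficient $\frac{m-1}{m}$ on $D$, and why no additional codimension-one orbifold locus appears elsewhere on $Y$. Once this is in place, all three conditions of the Koll\'ar criterion are satisfied, and the assumed orbifold K\"ahler--Einstein metric on $(Y,\frac{m-1}{m}D)$ lifts along the Seifert bundle to the desired Sasaki--Einstein metric on $L_X$.
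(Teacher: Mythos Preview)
The paper does not supply its own proof of this statement: Theorem~\ref{theorem:BGK-method} is stated with citations to \cite[Theorem~2.1]{BG00} and \cite[Theorem~5]{Kob63} and is used as a black box, so there is no in-paper argument to compare against. Your proposal goes further than the paper does, by sketching how the cited theorem actually follows from the Koll\'ar--Boyer--Galicki framework summarised in the introduction.

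As a sketch of the underlying argument, your outline is sound. The identification of $L_X\to Y$ as a Seifert $\mathbb{C}^*$-bundle with branch divisor $\frac{m-1}{m}D$ is correct, and your adjunction computation checks out: with $K_Y=\mathcal{O}_Y(\deg_{\mathbf{w}}G-\sum_i b_i)$, $D\sim\mathcal{O}_Y(b_0)$, and the identities $\deg_{\mathbf{w}}F=m\deg_{\mathbf{w}}G$, $\sum_i a_i=b_0+m\sum_{i\geq 1}b_i$, one indeed obtains $-(K_Y+\frac{m-1}{m}D)=\mathcal{O}_Y\bigl(\frac{1}{m}(\sum_i a_i-\deg_{\mathbf{w}}F)\bigr)$, ample by the hypothesis $\deg_{\mathbf{w}}F-\sum_i a_i<0$. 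Simple connectedness of $L_X$ (needed for the criterion as phrased) is supplied by Milnor's theorem, already quoted in the introduction. You are also right to flag the orbifold bookkeeping in the quotient construction as the point requiring the most care; that is precisely what the cited sources carry out in detail.
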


 Closed simply connected $5$-manifolds are completely classified by Barden and Smale (\cite{Ba65}, \cite{SM62}).  In particular, Smale  has classified all the closed simply connected spin $5$-manifolds (\cite{SM62}), which are called Smale $5$-manifolds. For a positive integer $m$,  up to diffeomorphisms, there is a unique closed simply connected  spin $5$-manifold $M_m$ with $\mathrm{H}_2(M_m,\mathbb{Z})=\mathbb{Z}/m\mathbb{Z}\oplus \mathbb{Z}/m\mathbb{Z}$. Furthermore, 
a closed simply connected spin $5$-manifold $M$   is of the form
\[M=kM_{\infty}\#M_{m_1}\#\ldots\# M_{m_r},\]
where  $kM_\infty$ is the $k$-fold connected sum of $S^2\times S^3$ for a  non-negative integer $k$ and $m_i$ is a positive integer greater than 1 with $m_i$ dividing $m_{i+1}$.

Many efforts have been made to classify all the closed simply connected Sasaki-Einstein $5$-manifolds.  
To be precise,
 for each Smale  $5$-manifold (every Sasaki-Einstein manifolds are spin), we want to determine whether it  has a quasi-regular Sasaki-Einstein  structure or not. 
Such efforts and their results are summarized in 
\cite{JP21}. Toward complete classification,  three conjectures were proposed in \cite{JP21}.  One of them is

\begin{conjecture}\label{conjecture:k-n2}
For each integer $k\leq 8$ and $n\geq 2$, the Smale $5$-manifold $kM_\infty\#nM_2$ admits a Sasaki-Einstein metric.
\end{conjecture}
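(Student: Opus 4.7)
The plan is to apply Theorem~\ref{theorem:BGK-method} to a carefully chosen infinite family of quasi-smooth weighted hypersurfaces, one family for each residue class of $(k,n)$. For each pair with $k\leq 8$ and $n\geq 2$, one needs weights $(a_0,a_1,a_2,a_3,a_4)$ with $m:=\gcd(a_1,a_2,a_3,a_4)=2$ and a quasi-homogeneous polynomial $F$ of degree $d$ satisfying $d-\sum a_i<0$ such that (a) the link $L_X$ is diffeomorphic to $kM_\infty\# nM_2$, and (b) the associated well-formed log Fano pair $\bigl(Y,\tfrac{1}{2}D\bigr)$ admits an orbifold K\"ahler--Einstein metric.

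First, I would search for suitable weights using the homological description of $L_X$. By the work of Orlik--Randell adapted by Boyer--Galicki--Koll\'ar, the torsion part of $\mathrm{H}_2(L_X,\mathbb{Z})$ is computed explicitly from the weights via characteristic polynomials of the monodromy, and taking $m=2$ makes each relevant divisorial stratum contribute a $\mathbb{Z}/2\oplus\mathbb{Z}/2$ summand; the free rank $2k$ of $\mathrm{H}_2$ equals the second Betti number of $Y$ minus one plus a correction from $D$. Tuning a single weight parameter linearly in $n$ should produce exactly $n$ copies of $M_2$, and the choice of the remaining weights should be calibrated (case-by-case in $k\leq 8$) to get the prescribed number of $S^2\times S^3$ summands while avoiding spurious $M_p$ summands for $p>2$; this is a finite combinatorial search per $k$, feeding an arithmetic progression in $n$.

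Next, I would establish the K\"ahler--Einstein condition on $\bigl(Y,\tfrac{1}{2}D\bigr)$ via a lower bound on the log canonical threshold, equivalently Tian's $\alpha$-invariant
\[
\alpha\!\left(Y,\tfrac{1}{2}D\right)>\tfrac{2}{3},
\]
using that $\dim Y=2$. The standard route is to estimate, at each cyclic quotient singularity and along $D$, the log canonical threshold of any effective $\mathbb{Q}$-divisor numerically equivalent to $-(K_Y+\tfrac{1}{2}D)$, via inversion of adjunction on $D$ and on the exceptional divisors of the minimal orbifold resolution. When the naive $\alpha$-invariant bound fails near the borderline cases, one should pass to $\delta$-invariants or to Fujita-type valuative stability thresholds, and choose the defining polynomial $F$ generically within its family to avoid unstable subschemes.

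The principal obstacle is making the KE estimate \emph{uniform in $n$}. As $n$ grows, the smallest ambient weight $a_0$ (or the degree $d$) grows linearly, which increases the number of singular strata on $Y$ and tends to drag the global log canonical threshold toward $2/3$ from above. I expect the argument to split into: a ``base list'' of small $n$ handled by direct computation and, for each fixed $k$, an infinite tail handled by a scaling family $(a_0(n),a_1,\ldots,a_4)$ in which the local models at the new singularities stabilize, so that a single local computation controls all large $n$ simultaneously. Supporting this tail with a rigorous uniform bound --- and certifying that the resulting link really is $kM_\infty\# nM_2$ and not a manifold with extra torsion --- is where the real work lies.
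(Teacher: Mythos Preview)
The statement you are addressing is a \emph{conjecture}, not a theorem; the paper does not prove it and indeed explicitly records that only the cases $k=0,1$ were previously known and that the present paper establishes $k=2$ (the Main Theorem, via Theorem~\ref{theorem:KE-del}). There is therefore no ``paper's own proof'' to compare against for general $k\leq 8$, and any complete argument for $k\geq 3$ would be new mathematics, not a reproduction of something in the text.

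Your proposal is not a proof but a research outline, and it contains a dimensional slip that would derail the program at the outset: you work with five weights $(a_0,\ldots,a_4)$, which places the hypersurface in $\mathbb{P}(a_0,\ldots,a_4)$ and produces a $7$-dimensional link, not a $5$-manifold. The paper (and the entire approach for Smale $5$-manifolds) uses four weights, so that $Y$ is a log del Pezzo \emph{surface} and the link sits in $S^7\subset\mathbb{C}^4$. Relatedly, the free rank of $\mathrm{H}_2$ of $kM_\infty\# nM_2$ is $k$, not $2k$. Beyond these slips, the substantive difficulty you flag---finding, for each $k\leq 8$, an explicit infinite family of quasi-smooth hypersurfaces whose links realize $kM_\infty\# nM_2$ and then proving the orbifold K\"ahler--Einstein condition uniformly in $n$---is exactly the open content of the conjecture. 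The paper's contribution for $k=2$ already requires a delicate $\delta$-invariant computation (not merely an $\alpha>\tfrac{2}{3}$ bound) tailored to the specific family $\widehat{S}_n\subset\mathbb{P}(2,4,4n,4n+1)$, with separate arguments at each singular point and a weighted blow-up at $O_w$; your sketch gives no indication of which families would work for $k=3,\ldots,8$, nor any mechanism for the uniform KE estimate, so as written it cannot be upgraded to a proof.
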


The conjecture has been verified for $k=0, 1$ so far (\cite{Ko05}, \cite{PW19}). Also,  $2M_\infty\#nM_2$ is proven to allow a Sasakian metric of positive Ricci curvature (\cite[Theorem~B]{BN10}).
In this article,  we prove the conjecture for $k=2$.
\begin{maintheorem*}
For every positive integer $n$, the Smale manifold $2M_\infty\#nM_2$ allows a Sasaki-Einstein metric.
\end{maintheorem*}

\section{K\"ahler-Einstein metric and K-stability} 
The theory  on K\"ahler-Einstein metrics and K-stability of Fano varieties and the theory on valuative criterions for K-stability  have developed dramatically for the last ten years.

In 2016 Fujita and Odaka introduced a new invariant of a Fano variety, so-called  $\delta$-invariant, which has evolved into a criterion for K-stability through the work of Blum and Jonsson. The $\delta$-invariant measures how singular the average divisors of sections that form a basis for plurianticanonical linear systems are, using their log canonical thresholds.

Let $X$ be a projective $\mathbb{Q}$-factorial normal variety and $\Omega$ be a $\mathbb{Q}$-divisor on $X$ such that the log pair $(X, \Omega)$ has at worst Kawamata log terminal singularities.
We suppose that $(X, \Omega)$ is a log  $\mathbb{Q}$-Fano variety, i.e., the divisor $-(K_X+\Omega)$ is ample.

\begin{definition}\label{definition:basis-type}
Let $m$ be a positive integer such that $|-m(K_X+\Omega)|$ is non-empty. Set $\ell_m=h^0(X,\mathcal{O}_X(-m(K_X+\Omega))$.  For a section $s$ in $\mathrm{H}^0(X,\mathcal{O}_X(-m(K_X+\Omega)))$, we denote the effective divisor of the section $s$ by $D(s)$.
If $\ell_m$ sections $s_1,\ldots, s_{\ell_m}$  form  a basis of  the space $\mathrm{H}^0(X,\mathcal{O}_X(-m(K_X+\Omega))$, then the  anticanonical 
$\mathbb{Q}$-divisor 
\[D:=\frac{1}{\ell_m}\sum_{i=1}^{\ell_m}\frac{1}{m}D (s_i)\]
is said to be of $m$-basis type.
We set 
\[\delta_m(X, \Omega)=\mathrm{sup}\left\{\lambda\in\mathbb{Q}\ \left|\ %
\aligned
&\text{the  log pair}\ \left(X, \Omega+\lambda D \right)\ \text{is log canonical for }\\
&\text{every effective $\mathbb{Q}$-divisor $D$ of $m$-basis type}\\
\endaligned\right.\right\}.\]
The $\delta$-invariant of $(X, \Omega)$ is defined by the number
\[\delta(X, \Omega)=\limsup_m \delta_m(X,\Omega).\]
\end{definition}

To study the $\delta$-invariant from local viewpoints, we set
\[\delta_{Z,m}(X, \Omega)=\mathrm{sup}\left\{\lambda\in\mathbb{Q}\ \left|\ %
\aligned
&\text{the  log pair}\ \left(X, \Omega+\lambda D \right)\ \text{is log canonical along $Z$  }\\
&\text{for every effective $\mathbb{Q}$-divisor $D$ of $m$-basis type}\\
\endaligned\right.\right\}\]
for a closed subvariety $Z$ of $X$.
The local $\delta$-invariant of $(X, \Omega)$ along $Z$ is defined by the number
\[\delta_Z(X, \Omega)=\limsup_m \delta_{Z,m}(X,\Omega).\]

Using the $\delta$-invariant, Blum-Jonsson (\cite{Blum-Jonsson17}) and Fujita-Odaka (\cite{Fujita-Odaka16})  set up a  criterion for K-(semi)stability in an algebro-geometric way. Due to the result \cite[Theorem~1.5]{LXZ21}, the criterion reads as follows:
\begin{theorem}\label{theorem:delta}
A log $\mathbb{Q}$-Fano variety $(X, \Omega)$ is K-stable (resp. K-semistable) if and only if $\delta(X, \Omega)>1$ (resp. $\geq 1$).
\end{theorem}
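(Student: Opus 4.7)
The plan is to recast the $\delta$-invariant in valuative form and then match it against the valuative criterion for K-(semi)stability. For a divisorial valuation $v=\ord_E$ given by a prime divisor $E$ on some birational model of $X$, write $A_{X,\Omega}(v)$ for the log discrepancy of $v$ with respect to $(X,\Omega)$ and set
\[
S_{X,\Omega}(v) = \frac{1}{\vol(-(K_X+\Omega))}\int_0^{\infty}\vol\bigl(-(K_X+\Omega)-tE\bigr)\,\dd t.
\]
Following Fujita--Odaka, I would produce $m$-basis-type divisors adapted to $v$ by diagonalising bases of $\mathrm{H}^0(X,\mathcal{O}_X(-m(K_X+\Omega)))$ with respect to the $v$-filtration; Okounkov-body asymptotics then identify the average order of vanishing along $E$ of such a basis-type divisor, in the limit $m\to\infty$, with $S_{X,\Omega}(v)$. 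Combining this with the identification of the log canonical threshold along $E$ with $A_{X,\Omega}(v)$ (applied to a generic snc model of $D$) gives the valuative formula
\[
\delta(X,\Omega)=\inf_v\frac{A_{X,\Omega}(v)}{S_{X,\Omega}(v)},
\]
and the approximation argument of Blum--Jonsson extends the infimum to all quasi-monomial valuations.

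The next ingredient is the valuative criterion of Fujita and Li: $(X,\Omega)$ is K-semistable if and only if $A_{X,\Omega}(v)\geq S_{X,\Omega}(v)$ for every divisorial $v$, and uniformly K-stable if and only if this inequality holds with a uniform multiplicative gap $A_{X,\Omega}(v)\geq(1+\epsilon)S_{X,\Omega}(v)$ for some $\epsilon>0$. Reading this against the valuative formula for $\delta$, the semistable equivalence $\delta(X,\Omega)\geq 1\iff(X,\Omega)$ is K-semistable follows at once, and similarly $\delta(X,\Omega)>1$ implies uniform K-stability and hence K-stability.

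The delicate direction is the converse, that K-stability forces $\delta(X,\Omega)>1$. Assuming $\delta(X,\Omega)=1$ for contradiction, one must upgrade the valuative obstruction into an actual destabilising test configuration rather than a mere filtration. This is precisely where \cite[Theorem~1.5]{LXZ21} enters: Liu--Xu--Zhuang prove that the infimum in the valuative formula is attained by a quasi-monomial valuation $v_0$ whose associated graded algebra on the anticanonical section ring is finitely generated. That finite generation promotes $v_0$ to a nontrivial special test configuration of $(X,\Omega)$, and the attained equality $A_{X,\Omega}(v_0)=S_{X,\Omega}(v_0)$ forces its generalised Donaldson--Futaki invariant to vanish, contradicting K-stability. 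The main obstacle is precisely this last step: without the \cite{LXZ21} finite generation theorem, a minimiser of $A/S$ yields only a filtration-theoretic obstruction, which is the very gap that historically separated uniform K-stability from K-stability, and whose closure by Liu--Xu--Zhuang is what delivers the sharp equivalence stated in the theorem.
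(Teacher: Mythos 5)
The paper does not prove this statement; it quotes it as a known result, attributing the criterion to Fujita--Odaka and Blum--Jonsson and the sharp form (K-stable, not merely uniformly K-stable, iff $\delta>1$) to \cite[Theorem~1.5]{LXZ21}. Your outline is a correct reconstruction of exactly that chain of arguments --- the valuative formula $\delta=\inf_v A_{X,\Omega}(v)/S_{X,\Omega}(v)$, the Fujita--Li valuative criterion, and the Liu--Xu--Zhuang finite generation theorem closing the gap between uniform K-stability and K-stability --- so it matches the paper's (implicit) approach.
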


The bridge between K-polystability and existence of K\"ahler-Einstein metrics  has been completely established for log Fano pairs (\cite{B16}, \cite{BBJ}, \cite{LXZ21},  \cite{CDS1, CDS2, CDS3}, \cite{Tian15}, \cite{Li19}, \cite{LTW19}, \cite{Xu21}). 
\begin{theorem}\label{theorem:delta-KE}
A Fano orbifold $(X, \Omega)$ is K-polystable if and only if it allows an orbifold K\"ahler-Einstein metric.
\end{theorem}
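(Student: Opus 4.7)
The plan is to reduce the orbifold statement to the recently-established Yau-Tian-Donaldson correspondence for singular log Fano pairs, and then promote the resulting weak K\"ahler-Einstein metric to an honest orbifold metric via local uniformization. First I would observe that an orbifold K\"ahler-Einstein metric on $(X,\Omega)$ with $\Omega=\sum(1-\frac{1}{m_i})D_i$ is the same data as a singular K\"ahler-Einstein metric on the log Fano pair $(X,\Omega)$ whose potential, when pulled back through each orbifold chart $\mathbb{C}^n/\mu_{m_i}$, extends to a smooth strictly plurisubharmonic function on the cover $\mathbb{C}^n$. With this dictionary in hand, the statement reduces to the log Fano version of Yau-Tian-Donaldson plus an orbifold regularity statement along the divisors $D_i$.

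For the ``only if'' direction I would follow Berman's argument (\cite{B16}): existence of a weak K\"ahler-Einstein metric on $(X,\Omega)$ forces the Ding functional to be proper modulo $\mathrm{Aut}^0(X,\Omega)$, and the asymptotic slope along the geodesic ray attached to any test configuration equals (up to a positive constant) the Donaldson-Futaki invariant, which gives K-polystability. For the ``if'' direction I would invoke the variational package of Berman-Boucksom-Jonsson (\cite{BBJ}) combined with the finite generation theorem of Liu-Xu-Zhuang (\cite{LXZ21}), which upgrades K-polystability of a log Fano pair to the existence of a minimizer of the Ding functional, hence a singular K\"ahler-Einstein metric; applied via Theorem~\ref{theorem:delta} this is the cleanest way to pass from the $\delta$-invariant bound to a metric. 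A parallel Chen-Donaldson-Sun continuity method on the boundary coefficients (\cite{CDS1,CDS2,CDS3}) is available but less natural with the fractional coefficients $1-\frac{1}{m_i}$.

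The hard part, in my view, is verifying that the weak solution produced on the ``if'' side actually lifts to a smooth metric in every orbifold chart: the singular K\"ahler-Einstein metric a priori only has conical singularities of cone angle $\frac{2\pi}{m_i}$ along $D_i$, and one must check that pulling back through the $\mu_{m_i}$-cover removes the cone and yields a smooth $(1,1)$-tensor. This is classical in spirit, going back to Kobayashi (\cite{Kob63}) and underlies Theorem~\ref{theorem:BGK-method}: on the uniformizing cover the Monge-Amp\`ere equation has smooth reference data and bounded, H\"older-continuous right-hand side, so Evans-Krylov estimates followed by Schauder bootstrapping deliver $C^\infty$-regularity on the cover. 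Once this orbifold regularity is in place, the two a priori distinct notions of K\"ahler-Einstein coincide, and the theorem follows from assembling the deep ingredients listed in the paper.
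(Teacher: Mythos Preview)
The paper does not prove this theorem at all: it is stated as background, preceded only by the sentence ``The bridge between K-polystability and existence of K\"ahler-Einstein metrics has been completely established for log Fano pairs'' together with the citation list \cite{B16}, \cite{BBJ}, \cite{LXZ21}, \cite{CDS1, CDS2, CDS3}, \cite{Tian15}, \cite{Li19}, \cite{LTW19}, \cite{Xu21}. There is no argument to compare against.

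Your proposal therefore already goes beyond what the paper offers: you sketch how those references assemble into a proof, invoking Berman \cite{B16} for the ``K\"ahler--Einstein $\Rightarrow$ K-polystable'' direction, the variational approach of Berman--Boucksom--Jonsson \cite{BBJ} together with the finite-generation theorem of Liu--Xu--Zhuang \cite{LXZ21} for the converse, and correctly flagging the orbifold regularity step needed to upgrade a weak K\"ahler--Einstein current with cone angle $2\pi/m_i$ along $D_i$ to a smooth metric on the uniformizing covers. That outline is sound and is precisely the logic encoded in the paper's citation list. One small remark: your aside invoking Theorem~\ref{theorem:delta} is about how the present theorem is \emph{applied} in the paper (via $\delta>1 \Rightarrow$ K-stable $\Rightarrow$ K-polystable $\Rightarrow$ K\"ahler--Einstein), not about how it is proved, so it sits somewhat awkwardly in the middle of the ``if'' direction; you might move it to a closing sentence or drop it.
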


\section{Tools for $\delta$-invariant }

Let $S$ be a surface with at most cyclic quotient singularities and  $D$ an effective $\mathbb{Q}$-divisor on the surface~$S$. Also let $p$ be a point of  $S$.

\begin{lemma}
\label{lemma:mult1}
Suppose that $p$ is a smooth point of $S$.
If the log pair $(S,D)$ is not log canonical at  $p$, then $\mathrm{mult}_p(D)>1$.
\end{lemma}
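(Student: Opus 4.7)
I argue the contrapositive: assuming $\mathrm{mult}_p(D)\le 1$, I show that $(S,D)$ is log canonical at $p$. Let $\pi : \tilde S \to S$ be the blow-up of the smooth point $p$, with exceptional divisor $E\cong\mathbb{P}^1$. The standard adjunction formula on a smooth surface, together with $\pi^*D=\tilde D+\mathrm{mult}_p(D)\,E$, yields
$$K_{\tilde S}+\tilde D+\bigl(\mathrm{mult}_p(D)-1\bigr)E \;=\; \pi^*(K_S+D),$$
where $\tilde D$ is the strict transform of $D$. By the definition of log canonicity in terms of discrepancies, $(S,D)$ is log canonical at $p$ if and only if the pair $\bigl(\tilde S,\tilde D+(\mathrm{mult}_p(D)-1)E\bigr)$ is log canonical in a neighbourhood of $E$.

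Since by hypothesis the coefficient $\mathrm{mult}_p(D)-1$ is non-positive, checking log canonicity reduces to verifying that $(\tilde S,\tilde D)$ is log canonical at each point of $\tilde D\cap E$. The key observation is that
$$\tilde D\cdot E \;=\; \mathrm{mult}_p(D) \;\le\; 1,$$
which forces $\mathrm{mult}_q(\tilde D)\le 1$ at every $q\in\tilde D\cap E$. Thus the multiplicity hypothesis is preserved on the blow-up, and I can iterate the argument at each bad point lying on the exceptional locus.

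This iteration terminates by the embedded resolution theorem for plane curves on a smooth surface. At each stage the blow-up of a smooth point preserves smoothness of the ambient surface, the strict transform continues to satisfy the bound $\mathrm{mult}\le 1$, and a direct computation of the two-step discrepancy formula shows that the coefficient of every newly-introduced exceptional divisor remains $\le 1$ (indeed, non-positive, because it takes the form $\mathrm{mult}_q(\tilde D)+\mathrm{mult}_p(D)-2$ after the second blow-up). Consequently, at the final log resolution every coefficient in the transformed log pair is $\le 1$, which establishes log canonicity of $(S,D)$ at $p$ and contradicts the hypothesis. The main point requiring care is the propagation of the multiplicity and coefficient bounds under successive blow-ups; however, once the first blow-up is analyzed as above, this reduces to a routine inductive bookkeeping that leverages the intersection bound $\tilde D\cdot E\le 1$ at each stage.
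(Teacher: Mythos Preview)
Your argument is correct. The paper does not actually prove this lemma; it merely cites \cite[Proposition~9.5.13]{La04}. You instead supply the standard direct argument on surfaces: blow up the smooth point, observe that the exceptional coefficient $\mathrm{mult}_p(D)-1$ is non-positive, use $\tilde D\cdot E=\mathrm{mult}_p(D)\le 1$ to propagate the multiplicity bound to every point of $E$, and then iterate, with termination furnished by embedded resolution of the support of $D$. This is exactly the classical surface proof underlying the cited reference, so your approach is not genuinely different---just explicit where the paper is not.

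One small presentational point: you oscillate between two equivalent bookkeeping schemes. In one place you drop the exceptional divisor (valid because its coefficient is $\le 0$, so discrepancies only go up) and recurse on $(\tilde S,\tilde D)$; in another you track the full pulled-back pair and quote the two-step coefficient $\mathrm{mult}_q(\tilde D)+\mathrm{mult}_p(D)-2$. Both are fine, but mixing them can confuse a reader. If you stick with the first scheme, the new exceptional coefficient at each stage is simply $\mathrm{mult}_q(\tilde D)-1\le 0$, and the induction is cleaner: at every stage you have a smooth surface, an effective $\mathbb{Q}$-divisor with multiplicity $\le 1$ at the relevant points, and a strictly shorter remaining resolution sequence.
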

\begin{proof}
See \cite[Proposition~9.5.13]{La04}, for instance.
\end{proof}

Let $C$ be an integral curve on $S$ that passes through the point $p$.
Suppose that $C$ is not contained in the support of the divisor $D$.
If $p$ is a smooth point of the surface $S$ and
the log pair $(S,D)$ is not log canonical at $p$, then 
it follows from Lemma~\ref{lemma:mult1} that $D\cdot C>1$.

\begin{lemma}
\label{lemma:mult-1-n}
Suppose that $p$ is a  cyclic quotient singularity of type $\frac{1}{n}(a,b)$, where $a$ and $b$ are coprime positive integers that are also coprime to $n$.
If the log pair $(S,D)$ is not log canonical at~$p$
and $C$ is not contained in the support of the divisor $D$, then $$D\cdot C>\frac{1}{n}.$$
\end{lemma}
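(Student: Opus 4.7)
The plan is to reduce Lemma \ref{lemma:mult-1-n} to Lemma \ref{lemma:mult1} by passing to a local smooth cover of the cyclic quotient singularity. Let $\pi \colon (\widetilde{U}, \widetilde{p}) \to (U, p)$ be the index-one cover of a small analytic neighborhood $U$ of $p$, i.e.\ the quotient presentation $\widetilde{U} = \mathbb{C}^2 \to \mathbb{C}^2/\mu_n = U$ corresponding to the type $\frac{1}{n}(a,b)$. By construction $\pi$ is a cyclic Galois cover of degree $n$ that is étale in codimension one (it is ramified only over $p$), so $K_{\widetilde{U}} = \pi^{\ast} K_U$.

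Since $\pi$ is étale in codimension one, the standard transformation rule for log canonical thresholds under finite étale-in-codimension-one covers gives that $(U, D)$ is log canonical at $p$ if and only if $(\widetilde{U}, \pi^{\ast}D)$ is log canonical at $\widetilde{p}$. By hypothesis the former fails, so $(\widetilde{U}, \pi^{\ast}D)$ is not log canonical at the smooth point $\widetilde{p}$, and Lemma \ref{lemma:mult1} yields
\[
\mathrm{mult}_{\widetilde{p}}\bigl(\pi^{\ast} D\bigr) > 1.
\]
Next I would use the curve $C$. Because $C$ passes through $p$ and $C$ is $\mathbb{Q}$-Cartier, the pull-back $\pi^{\ast} C$ is a well-defined effective $\mathbb{Q}$-divisor on $\widetilde{U}$ whose support passes through $\widetilde{p}$, hence $\mathrm{mult}_{\widetilde{p}}(\pi^{\ast} C) \geq 1$ (in fact it equals the ramification-weighted count of preimage branches). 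Moreover, since $C \not\subset \mathrm{Supp}(D)$, the pull-backs share no common components, so the local intersection at $\widetilde{p}$ is bounded below by the product of multiplicities:
\[
(\pi^{\ast} D \cdot \pi^{\ast} C)_{\widetilde{p}} \;\geq\; \mathrm{mult}_{\widetilde{p}}(\pi^{\ast} D)\cdot\mathrm{mult}_{\widetilde{p}}(\pi^{\ast} C) \;>\; 1.
\]

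Finally, because $\pi$ has degree $n$, the projection formula for intersections of $\mathbb{Q}$-Cartier divisors gives $\pi^{\ast} D \cdot \pi^{\ast} C = n\,(D\cdot C)$ locally at $p$, so $n\,(D\cdot C) > 1$, which is the desired inequality $D\cdot C > \tfrac{1}{n}$.

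The routine part is the inequality $\mathrm{mult}_{\widetilde{p}}(\pi^{\ast} D)\cdot \mathrm{mult}_{\widetilde{p}}(\pi^{\ast} C) \leq (\pi^{\ast} D \cdot \pi^{\ast} C)_{\widetilde{p}}$, which is standard at a smooth surface point. The only real point that requires care is the behavior of log canonicity and of the intersection pairing under the cover $\pi$; both are classical (the first is the cyclic-cover/ramification formula with trivial different since $\pi$ is étale in codimension one, and the second follows from $\pi^{\ast}$ being multiplication by the degree on the intersection pairing for $\mathbb{Q}$-Cartier classes). So the main conceptual step is just identifying the correct local cover, after which the result drops out of Lemma \ref{lemma:mult1}.
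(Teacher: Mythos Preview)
Your argument is correct and is essentially an explicit unpacking of the paper's one-line proof: the paper cites \cite[Proposition~3.16]{Ko97} for the behaviour of log canonicity under the index-one cover, Lemma~\ref{lemma:mult1} for the smooth case upstairs, and \cite[Lemma~2.2]{CPS10} for the comparison of intersection numbers under the quotient map---exactly the three ingredients you use. The only point worth tightening is the claim $\mathrm{mult}_{\widetilde{p}}(\pi^{\ast}C)\geq 1$: since $\pi$ is \'etale in codimension one, $\pi^{\ast}C$ coincides with the reduced preimage $\pi^{-1}(C)$ (no fractional coefficients arise), and every component passes through the unique fixed point $\widetilde{p}$, so the multiplicity is at least the number of components, hence $\geq 1$.
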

\begin{proof}
This follows from \cite[Proposition~3.16]{Ko97}, Lemma~\ref{lemma:mult1}, and \cite[Lemma~2.2]{CPS10}.
\end{proof}
In general, the curve $C$ may be contained in the support of the divisor $D$. In this case, we write
$$
D=aC+\Omega,
$$
where $a$ is a positive rational number and $\Omega$ is an effective $\mathbb{Q}$-divisor on $S$ whose support does not contain the curve $C$.
We suppose that $(S, C)$ is purely log terminal around $p$.

\begin{lemma}
\label{lemma:inversion-of-adjunction}
Suppose that $a\leq1$ and the log pair $(S,D)$ is not log canonical at $p$.
\begin{enumerate}
\item If $p$ is a smooth point,  then
$$
C\cdot\Omega\geq\left(C\cdot\Omega\right)_p>1,
$$
where $\left(C\cdot\Omega\right)_p$ is the local intersection number of $C$ and $\Omega$ at $p$.
\item  If $p$ is a  cyclic quotient singularity of type $\frac{1}{n}(a,b)$,
then
$$
C\cdot\Omega>\frac{1}{n}.
$$
\end{enumerate}
\end{lemma}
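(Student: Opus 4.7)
The plan is to reduce to a one-dimensional log canonicity question on the curve $C$ by inversion of adjunction. Since $a\leq 1$, we have $aC+\Omega\leq C+\Omega$ as $\mathbb{Q}$-divisors, and log canonicity is monotone in the coefficients, so $(S,C+\Omega)$ is also not log canonical at $p$. Because $(S,C)$ is purely log terminal in a neighborhood of $p$, the curve $C$ is normal (in particular smooth) at $p$, and Kollár's inversion of adjunction for plt pairs (see \cite[Theorem~7.5]{Ko97}) applies: the log pair
$$\bigl(C,\mathrm{Diff}_C(\Omega)\bigr)$$
is not log canonical at $p$, where $\mathrm{Diff}_C(\Omega)=\mathrm{Diff}_C(0)+\Omega|_C$ as $\mathbb{Q}$-divisors on the smooth curve $C$.

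Since $C$ is a smooth curve at $p$, not being log canonical at $p$ means simply that the coefficient of $p$ in $\mathrm{Diff}_C(\Omega)$ exceeds $1$ (cf.\ Lemma~\ref{lemma:mult1}). Writing $\delta_p:=\mathrm{coeff}_p\mathrm{Diff}_C(0)$ and recalling that the restriction $\Omega|_C$ places the fractional intersection number $(\Omega\cdot C)_p$ at the point $p$, we obtain
$$\delta_p+(\Omega\cdot C)_p>1.$$

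In case~(1), $p$ is a smooth point of $S$, so $\mathrm{Diff}_C(0)=0$ and $\delta_p=0$; the inequality gives $(\Omega\cdot C)_p>1$, and consequently $\Omega\cdot C\geq(\Omega\cdot C)_p>1$ as claimed. In case~(2), $p$ is a cyclic quotient singularity of type $\tfrac{1}{n}(a,b)$, and it is standard (see, e.g., \cite[Proposition~3.9]{Ko97}) that the different of the plt pair $(S,C)$ at $p$ has the form $\delta_p=1-\tfrac{1}{m}$ for some positive integer $m$ dividing $n$, whence $\delta_p\leq\tfrac{n-1}{n}$. The inequality above then yields
$$(\Omega\cdot C)_p>1-\frac{n-1}{n}=\frac{1}{n},$$
so $\Omega\cdot C\geq(\Omega\cdot C)_p>\tfrac{1}{n}$.

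The main subtlety is invoking inversion of adjunction in the correct form for a plt surface pair and identifying the different at the quotient singularity; beyond that, the argument is a direct translation of non-log-canonicity on $S$ to an intersection-number statement on $C$, exactly parallel to how Lemmas~\ref{lemma:mult1} and~\ref{lemma:mult-1-n} were derived.
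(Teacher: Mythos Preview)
Your argument is correct and is precisely the standard inversion-of-adjunction proof of this statement: pass from $(S,aC+\Omega)$ to $(S,C+\Omega)$ by monotonicity, restrict to $C$ via plt adjunction, and read off the coefficient of $p$ in the different. The paper itself gives no argument and simply cites \cite[Lemma~2.5]{CPS10}, whose proof is essentially the one you have written out.
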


\begin{proof}
See \cite[Lemma~2.5]{CPS10}.
\end{proof}

We now let  $\left(S, \Omega\right)$ be a log del Pezzo surface that allows only Kawamata log terminal singularities.
It follows from \cite[Corollary~1.3.2]{BCHM} that a log del Pezzo surface is a Mori dream space.
Let $C$ be a prime divisor over $S$ and let $\pi:\hat{S}\to S$ be a birational morphism such that $C$ is a divisor on~$\hat{S}$.

We first set
\begin{equation}\label{equation:S}
S_{S, \Omega}(C)=\frac{1}{\left(-\left(K_S+\Omega\right)\right)^2}\int_0^{\infty}\vol\left(\pi^*\left(-\left(K_S+\Omega\right)\right)-tC\right)dt.
\end{equation}

Taken the definition of the basis type divisors in Definition~\ref{definition:basis-type} into consideration, it is natural to expect that a divisor of $m$-basis type cannot carry  a  prime divisor with big multiplicity for a sufficiently large $m$. Indeed, the following bound is  originally given in \cite[Lemma~2.2]{Fujita-Odaka16}.
\begin{lemma}\label{lemma:Fujita}
For a given real number $\epsilon>0$, there is an integer $\mu$ such that 
whenever $m>\mu$, we have
\[\ord_{C}(\pi^*(D))\leq S_{S, \Omega}(C)+\epsilon \]
for every ample $\mathbb{Q}$-divisor $D$ of $m$-basis type with respect to $(S, \Omega)$.
\end{lemma}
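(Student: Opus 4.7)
The plan is to filter $\mathrm{H}^0(S,\mathcal{O}_S(-m(K_S+\Omega)))$ by the order of vanishing along~$C$ and to identify the asymptotic mass of the filtration with the integral defining $S_{S,\Omega}(C)$. Writing $L_m=-m(K_S+\Omega)$, I would set
\[
F^jH^0(S,L_m)=\bigl\{s\in \mathrm{H}^0(S,L_m)\mid \ord_C(\pi^*s)\geq j\bigr\}\cong \mathrm{H}^0(\hat{S},\pi^*L_m-jC)
\]
and observe that for any basis $s_1,\ldots, s_{\ell_m}$ of $\mathrm{H}^0(S,L_m)$ the subset $\{s_i\mid \ord_C(\pi^*s_i)\geq j\}$ is linearly independent in $F^jH^0(S,L_m)$ and thus has cardinality at most $\dim F^jH^0(S,L_m)$. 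Summing over $j\geq 1$ and dividing by $m\ell_m$ then yields, for every $m$-basis type divisor~$D$,
\[
\ord_C(\pi^* D)=\frac{1}{m\ell_m}\sum_{i=1}^{\ell_m}\ord_C(\pi^* s_i)\leq \frac{1}{m\ell_m}\sum_{j\geq 1}h^0(\hat{S},\pi^*L_m-jC).
\]

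Next, I would compare the right-hand side with the integral appearing in~\eqref{equation:S}. Setting $\tau=\sup\{t\geq 0\mid \pi^*(-(K_S+\Omega))-tC\text{ is big}\}$ and $\psi(t)=\vol(\pi^*(-(K_S+\Omega))-tC)$, the function $\psi$ is continuous and supported on $[0,\tau]$. Asymptotic Riemann--Roch gives
\[
\frac{2}{m^2}\, h^0(\hat{S},\pi^*L_m-\lceil mt\rceil C)\longrightarrow \psi(t)
\]
as $m\to\infty$, uniformly for $t\in [0,\tau]$, while $\ell_m=\tfrac{m^2}{2}(-(K_S+\Omega))^2+O(m)$. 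Viewing $\sum_{j\geq 1}\tfrac{1}{m}h^0(\hat{S},\pi^*L_m-jC)$ as a Riemann sum with step~$1/m$, these two asymptotics combine to give
\[
\frac{1}{m\ell_m}\sum_{j\geq 1}h^0(\hat{S},\pi^*L_m-jC)\longrightarrow \frac{1}{(-(K_S+\Omega))^2}\int_0^\tau \psi(t)\dd t=S_{S,\Omega}(C)
\]
as $m\to\infty$. For the given $\epsilon>0$, I would pick $\mu$ so that the left-hand side is at most $S_{S,\Omega}(C)+\epsilon$ for every $m>\mu$, which combined with the previous inequality produces the claimed bound.

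The main obstacle is that the estimate on $\ord_C(\pi^*D)$ must be independent of both the basis and the choice of $m$-basis type divisor~$D$; this is precisely the reason for passing through the crude upper bound $\sum\ord_C(\pi^*s_i)\leq \sum\dim F^jH^0(S,L_m)$, which is blind to which basis we started with. The second delicate point is the uniform convergence of the normalized dimensions to the volume function, which rests on the continuity of $\vol$ on the big cone; in the surface setting one may if needed make $\psi$ piecewise-polynomially explicit via Zariski decomposition of $\pi^*(-(K_S+\Omega))-tC$. Everything else is routine bookkeeping with Riemann sums.
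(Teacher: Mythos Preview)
Your argument is correct and is precisely the original Fujita--Odaka filtration argument (\cite[Lemma~2.2]{Fujita-Odaka16}). The paper does not prove the lemma independently: its entire proof is the sentence ``See \cite[Theorem~2.9]{CPS21}'', and that reference in turn unwinds to the same vanishing-order filtration and Riemann-sum comparison you have written down. So there is no substantive difference to compare; you have simply supplied the proof that the paper outsources to the literature. The one point worth tightening is the claim of \emph{uniform} convergence on $[0,\tau]$: near $t=\tau$ the divisor leaves the big cone and the asymptotic Riemann--Roch error term is not obviously uniform, but the crude bound $h^0(\hat S,\pi^*L_m-jC)\leq \ell_m$ together with the fact that only $O(m)$ values of $j$ lie in any short interval already forces the boundary contribution to be $o(m^3)$, hence negligible after dividing by $m\ell_m$.
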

\begin{proof}
See  \cite[Theorem~2.9]{CPS21}.
\end{proof}

The following describe how to estimate $\delta(S, \Omega)$ from local viewpoints, which are developed in \cite{Abban-Zhuang20} and simplified in \cite{Fano3-folds} and \cite{Fujita21}.

Suppose that $(S,\Omega+C)$ is purely log terminal.
Let $\tau$ be  the supremum of the positive real numbers $t$ such that $-\left(K_S+\Omega\right)-tC$ is big.
For  a real number $u\in (0,\tau)$,   we write  the Zariski decomposition of  $-(K_S+\Omega)-uC$ as
\[-(K_S+\Omega)-uC=P(u)+N(u),\]
where
$P(u)$ and $N(u)$ are the positive and the negative parts, respectively.
Let $p$ be a point on~$C$. Define
\begin{equation}\label{equation:h}
h(u)=\left(P(u)\cdot C\right)\cdot\ord_{p}\left(N(u)\big\vert_{C}\right)+\int_0^\infty \vol\left(P(u)\big\vert_C-vp\right)dv
\end{equation}
and then put
\begin{equation}\label{equation:Kento-formula-S}
S\left(W^C_{\bullet,\bullet}; p\right)=\frac{2}{(-(K_S+\Omega))^2}\int_0^\tau h(u)du.
\end{equation}

 Recall that we have the following adjunction formula

$$
\left(K_S+\Omega+C\right)\big\vert_C=K_C+\Delta
$$ where $\Delta$ is the different for $K_S+\Omega+C$. Then,
the log discrepancy of the pair $(C, \Delta)$ along the divisor $p$ is

$$
A_{C, \Delta}(p)=1-\ord_p(\Delta).
$$ 

If $p$ is a quotient singular point of type $\frac{1}{n}(a, b)$, then
\begin{equation}\label{equation:dicr}
A_{C, \Delta}(p)=\frac{1}{n}-\left(\Omega\cdot C\right)_p.
\end{equation}

\begin{theorem}\label{theorem:Hamid-Ziquan-Kento}
The local $\delta$-invariant of $(S, \Omega)$ at the point $p$ satisfies the inequality
$$
\delta_p(S, \Omega)\geq\mathrm{min}\left\{\frac{1}{S_{S,\Omega}(C)},\frac{A_{C,\Delta}(p)}{S\left(W^C_{\bullet,\bullet};p\right)}\right\}.
$$
\end{theorem}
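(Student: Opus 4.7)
The plan is to argue by contradiction. Suppose $\delta_p(S,\Omega)$ is strictly less than the right-hand side. Then there exist a positive rational number $\lambda$ with
\[
\lambda < \min\left\{\frac{1}{S_{S,\Omega}(C)},\ \frac{A_{C,\Delta}(p)}{S\!\left(W^C_{\bullet,\bullet};p\right)}\right\},
\]
an integer $m$ that may be taken arbitrarily large, and an effective $\mathbb{Q}$-divisor $D$ of $m$-basis type with respect to $(S,\Omega)$ such that $(S, \Omega + \lambda D)$ fails to be log canonical at $p$. The goal is to contradict the second bound after a reduction to the curve $C$.

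First I would separate the part of $D$ supported on $C$: write $D = a C + D'$, where $a = \ord_C(D)$ and the support of $D'$ does not contain $C$. By Lemma~\ref{lemma:Fujita}, applied with $\pi$ the identity, $a \leq S_{S,\Omega}(C) + \epsilon$ for $m$ sufficiently large, so the assumption $\lambda < 1/S_{S,\Omega}(C)$ forces $\lambda a < 1$. The pair $(S, \Omega + C + \lambda D')$ is obtained from $(S, \Omega + \lambda D)$ by adding the effective divisor $(1-\lambda a)C$, hence it is also not log canonical at $p$. Since $(S, \Omega + C)$ is purely log terminal and the support of $\lambda D'$ does not contain $C$, inversion of adjunction yields that $(C, \Delta + \lambda D'|_C)$ is not log canonical at $p$. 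As $C$ is a curve, this is equivalent to $\ord_p(\Delta) + \lambda \ord_p(D'|_C) > 1$, which by \eqref{equation:dicr} reads
\[
\lambda \cdot \ord_p\!\left(D'|_C\right) > A_{C,\Delta}(p).
\]

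The heart of the proof is therefore to show that $\ord_p(D'|_C) \leq S(W^C_{\bullet,\bullet};p) + \epsilon'$ for every $m$-basis type divisor $D$, provided $m$ is large. To do this, I would choose a basis $s_1, \ldots, s_{\ell_m}$ of $\mathrm{H}^0(S, \mathcal{O}_S(-m(K_S+\Omega)))$ adapted to a two-step filtration: first by order of vanishing along $C$, and then, within each graded piece, by order of vanishing at $p$ along $C$. A section $s_i$ with $\ord_C(s_i)=k$ restricts, after division by the $k$-th power of a local equation of $C$, to a section on $C$ whose linear series is governed, at level $u = k/m$, by the Zariski decomposition $-(K_S+\Omega)-uC = P(u)+N(u)$: its degree is controlled by $P(u)\cdot C$, after one subtracts the forced vanishing imposed by $N(u)|_C$. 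Averaging $\ord_p(s_i|_C)$ over the basis, one bounds $\ord_p(D'|_C)$ by a Riemann sum that, as $m\to\infty$, converges to the integral defining $S(W^C_{\bullet,\bullet};p)$ in \eqref{equation:Kento-formula-S}.

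The principal difficulty is this last asymptotic estimate. One has to verify that the contribution to the averaged vanishing order coming from the fixed part $N(u)$ of the restricted series accumulates to $(P(u)\cdot C)\cdot \ord_p(N(u)|_C)$, while the contribution from the movable part produces the volume integral $\int_0^\infty \vol(P(u)|_C - v p)\,dv$ via an Okounkov-body-type argument, so that the Riemann sum in $u$ limits to
\[
\frac{2}{(-(K_S+\Omega))^2}\int_0^\tau h(u)\,du = S\!\left(W^C_{\bullet,\bullet};p\right).
\]
Combining this with the inequality $\lambda \cdot \ord_p(D'|_C) > A_{C,\Delta}(p)$ established above gives $\lambda > A_{C,\Delta}(p)/(S(W^C_{\bullet,\bullet};p) + o(1))$ for $m$ large, contradicting the choice of $\lambda$ and completing the proof.
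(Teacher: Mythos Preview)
Your reduction in the first two paragraphs is sound, but the core claim---that $\ord_p(D'|_C)\le S(W^C_{\bullet,\bullet};p)+\epsilon'$ for \emph{every} $m$-basis type divisor $D$ once $m$ is large---is false. The function $s\mapsto b(s):=\ord_p\bigl((s/t^{\ord_C(s)})|_C\bigr)$ is not a valuation on the function field: with $C=\{y=0\}\subset\mathbb{A}^2$ and $p$ the origin one has $b(x)=b(-x+y)=1$ yet $b\bigl(x+(-x+y)\bigr)=b(y)=0$, so the Fujita--Odaka principle that an adapted basis maximizes the averaged value simply does not apply to $b$. Concretely, take $S=\mathbb{P}^2$, $\Omega=0$, $C$ a line and $p\in C$; then $S(W^C_{\bullet,\bullet};p)=1$, but replacing each monomial $X^iY^jZ^k$ in the monomial basis of $H^0(\mathcal{O}_{\mathbb{P}^2}(3m))$ by $X^iY^jZ^k+X^{3m}$ gives a genuine basis whose associated $m$-basis type divisor $D$ satisfies $\ord_C(D)=0$ and $\ord_p(D|_C)=3-\tfrac{3}{3m+2}\to 3$ as $m\to\infty$. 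Thus the quantity you propose to bound can exceed $S(W^C_{\bullet,\bullet};p)$ by a fixed factor for all large $m$, and the contradiction in your final paragraph never materializes. Your adapted-basis computation does recover the value $S(W^C_{\bullet,\bullet};p)$, but that value is not an upper bound over all bases.

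The paper's proof is a direct citation to \cite[Theorem~4.8~(2) and Corollary~4.9]{Fujita21} and proceeds along a genuinely different line. One first passes to the valuative description $\delta_p(S,\Omega)=\inf_E A_{S,\Omega}(E)/S_{S,\Omega}(E)$, the infimum taken over prime divisors $E$ over $S$ whose center contains $p$, and then the Abban--Zhuang inequality bounds each ratio $A_{S,\Omega}(E)/S_{S,\Omega}(E)$ from below by $\min\bigl\{1/S_{S,\Omega}(C),\ A_{C,\Delta}(F)/S(W^C_{\bullet,\bullet};F)\bigr\}$ for a suitable prime divisor $F$ over $C$ with center $p$; since $C$ is a curve, the only such $F$ is the point $p$ itself. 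This comparison is carried out valuation by valuation and never requires a uniform estimate on $\ord_p(D'|_C)$ across all basis-type divisors.
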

\begin{proof}
This immediately follows from   \cite[Theorem~4.8~(2) and Corollary~4.9]{Fujita21} because the point~$p$ is the only prime divisor over the curve $C$ with the center at $p$  in \cite[Definition~3.11]{Fujita21}.
\end{proof}

\section{Sasaki-Einstein $5$-manifolds  $2M_\infty\#nM_2$}

For each integer $n\geq 2$, let $\widehat{S}_n$ be a quasi-smooth hypersurface of degree $4(2n+1)$ in  $\mathbb{P}(2, 4, 4n, 4n+1)$. This hypersurface appears in \cite{BN10}  to give a Sasakian metric of positive Ricci curvature to $2M_\infty\#nM_2$. By using appropriate coordinate changes, we may assume that the surface is defined by 
\[w^2x+yz(z-y^n)+zx\widehat{A}_{4n+2}(x,y)+x\widehat{A}_{8n+2}(x,y)=0,\]
where $x$, $y$, $z$, $w$  are quasi-homogeneous coordinates  with $\mathrm{wt}(x)=2$,  $\mathrm{wt}(y)=4$,  $\mathrm{wt}(z)=4n$,  $\mathrm{wt}(w)=4n+1$ and
$\widehat{A}_{4n+2}(x,y)$, $\widehat{A}_{8n+2}(x,y)$ are quasi-homogeneous polynomials  of degrees $4n+2$ and $8n+2$, respectively, in $x$, $y$.

We use the same notation $x$, $y$, $z$, $w$ for homogeneous coordinates of the weighted projective space $\mathbb{P}(1, 2, 2n, 4n+1)$  with $\mathrm{wt}(x)=1$,  $\mathrm{wt}(y)=2$,  $\mathrm{wt}(z)=2n$,  $\mathrm{wt}(w)=4n+1$.  
Let $S_n$ be the quasi-smooth hypersurface of degree $2(2n+1)$ in  $\mathbb{P}(1, 2, 2n, 4n+1)$
defined by 
\[wx+yz(z-y^n)+zxA_{2n+1}(x,y)+xA_{4n+1}(x,y)=0,\]
where $A_{2n+1}(x, y)$ and $A_{4n+1}(x, y)$  are the quasi-homogeneous polynomials  of degrees $2n+1$ and $4n+1$ defined by $\widehat{A}_{4n+2}(x,y)$ and  $\widehat{A}_{8n+2}(x,y)$, respectively, with weights $\mathrm{w}(x)=1$ and $\mathrm{w}(y)=2$. We denote by $W$ the irreducible divisor on $S_n$ cut by $w=0$. As an orbifold, $\widehat{S}_n$ can be regarded as the log del Pezzo surface
$\left(S_n,\frac{1}{2}W\right)$.

We consider the link of $\widehat{S}_n$.  It follows from \cite[Corollary]{MO70} that the link of $\widehat{S}_n$ has the second Betti number  $2$.  The curve $W$ is isomorphic to a smooth curve of degree $2n+1$ in $\mathbb{P}(1,1,n)$, and hence its genus is~$n$.  It then follows from \cite[Theorem~5.7]{Ko05} that the torsion part of the second homology group of the link is  $\left(\mathbb{Z}/2\mathbb{Z}\oplus \mathbb{Z}/2\mathbb{Z}\right)^{\oplus n}$.  Consequently, the link of $\widehat{S}_n$ is diffeomorphic to $2M_\infty\#nM_2$ by  \cite[Theorem]{SM62}.   

Therefore, Theorem~\ref{theorem:BGK-method} implies that  the following statement guarantees existence of a Sasaki-Einstein metric on $2M_\infty\#nM_2$.  In other words, Main Theorem  immediately  follows from the theorem below.

\begin{theorem}\label{theorem:KE-del}
For $n\geq 2$, $\left(S_n,\frac{1}{2}W\right)$ allows an orbifold K\"ahler-Einstein metric. 
\end{theorem}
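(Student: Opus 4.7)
The plan is to apply Theorem~\ref{theorem:delta-KE} via the K-stability criterion of Theorem~\ref{theorem:delta}, so it suffices to show that $\delta(S_n, \frac{1}{2}W) > 1$. Adjunction on the quasi-smooth hypersurface $S_n \subset \mathbb{P}(1,2,2n,4n+1)$ of degree $4n+2$ gives $-(K_{S_n}+\frac{1}{2}W) \sim_{\mathbb{Q}} \frac{3}{2}H$, where $H$ is the hyperplane class on $S_n$; this divisor is ample with square $\frac{9(2n+1)}{8n(4n+1)}$, so $(S_n, \frac{1}{2}W)$ is indeed a log del Pezzo pair to which the machinery of Section~3 applies.

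The relevant cyclic quotient singularities of $S_n$, inherited from the ambient space, are the index-$(4n+1)$ point $p_w = (0{:}0{:}0{:}1)$, the index-$2n$ point $p_z = (0{:}0{:}1{:}0)$, and the $\mathbb{Z}/2$-quotient points on $\{x=w=0\}\cap S_n$, among them $p_y = (0{:}1{:}0{:}0)$ and $(0{:}1{:}1{:}0)$. The curve $W$ is irreducible and smooth of genus $n$; it contains $p_y$, $p_z$ and $(0{:}1{:}1{:}0)$, and avoids $p_w$. Because the $S_n$-equation is linear in $w$, each of the coordinate hyperplane sections $\{x=0\}\cap S_n$, $\{y=0\}\cap S_n$, and $\{z=0\}\cap S_n$ decomposes into rational components passing through the singular points, and these components, together with $W$, provide a convenient palette of test curves for the Abban--Zhuang inequality.

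For each $p \in S_n$ the plan is to bound $\delta_p(S_n, \frac{1}{2}W)$ from below by invoking Theorem~\ref{theorem:Hamid-Ziquan-Kento} with a curve $C$ adapted to $p$: at a point of $W$ we take $C = W$, while at $p_w$ and at points lying on the coordinate sections above we take a suitable rational component of one of these sections. At a smooth point $p$ lying outside all these special loci, the Fujita-type bound of Lemma~\ref{lemma:Fujita}, combined with Lemmas~\ref{lemma:mult1} and~\ref{lemma:inversion-of-adjunction}, already forces log canonicity of $(S_n, \frac{1}{2}W + \lambda D)$ at $p$ for every $m$-basis-type divisor $D$ (with $m$ large) and every $\lambda$ slightly larger than $1$. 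Wherever Theorem~\ref{theorem:Hamid-Ziquan-Kento} is needed, one computes the Zariski decomposition $\frac{3}{2}H - uC = P(u) + N(u)$ for $u \in (0,\tau)$, then computes $S_{S_n, \frac{1}{2}W}(C)$ from \eqref{equation:S} and $S(W^C_{\bullet,\bullet}; p)$ from \eqref{equation:h}--\eqref{equation:Kento-formula-S}, reads off the log discrepancy $A_{C,\Delta}(p)$ from \eqref{equation:dicr}, and checks that the resulting minimum exceeds $1$.

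The principal obstacle is the Zariski decomposition step. As $u$ grows, several negative curves on $S_n$ may enter $N(u)$ in succession, producing a multi-chamber structure whose walls and pseudo-effective thresholds must be tracked uniformly in $n$. The tightest situation is expected at $p_w$: since $W$ misses $p_w$ and the singularity has large index, $A_{C,\Delta}(p_w) = \frac{1}{4n+1}$, so the estimate succeeds only if $S(W^C_{\bullet,\bullet}; p_w)$ is correspondingly small. Choosing the auxiliary curve $C$ through $p_w$ so that its Zariski decomposition remains tractable across all chambers, and then carrying out the analogous inequalities at $p_z$ and at the $\mathbb{Z}/2$-points on $W$, constitutes the bulk of the argument; the computations must be closed-form so that $\delta(S_n, \frac{1}{2}W) > 1$ holds for every $n \geq 2$ simultaneously.
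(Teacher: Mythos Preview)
Your overall strategy---reduce to $\delta(S_n,\frac{1}{2}W)>1$ via Theorems~\ref{theorem:delta} and~\ref{theorem:delta-KE} and then bound $\delta_p$ pointwise---is the paper's strategy, and your treatment of generic smooth points and of $O_z$, $O_0$, $O_1$ is essentially what the paper does (the paper uses $L_{xy}$ and $R_i$ rather than $W$, but that is a minor variation). The substantive gap is at $O_w$.

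You commit to applying Theorem~\ref{theorem:Hamid-Ziquan-Kento} at $O_w$ with a curve $C\subset S_n$, namely ``a suitable rational component'' of a coordinate section. The only such curves through $O_w$ are $L_{xy}$, $R_0$, $R_1$, and none of them yields $\delta_{O_w}>1$. Take $C=L_{xy}$: for $p\in L_{xy}\setminus\{O_w\}$ one has $S(W^{L_{xy}}_{\bullet,\bullet};p)=\frac{4n^2+3n+1}{4n(2n+1)(4n+1)}$ as in \eqref{equation:Kento-formula-S-L}, but at $O_w$ the negative part $N_L(t)=\frac{4t-3}{2}(R_0+R_1)$ for $t\in[\frac{3}{4},\frac{3}{2}]$ contributes through the first summand of \eqref{equation:h}. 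Since $R_0$, $R_1$ meet $L_{xy}$ only at $O_w$ with $(R_i\cdot L_{xy})_{O_w}=\frac{1}{4n+1}$, this extra piece equals $\frac{1}{4(2n+1)}$, and one gets
\[
S\bigl(W^{L_{xy}}_{\bullet,\bullet};O_w\bigr)=\frac{8n^2+4n+1}{4n(2n+1)(4n+1)},\qquad
\frac{A_{L_{xy},\Delta}(O_w)}{S(W^{L_{xy}}_{\bullet,\bullet};O_w)}=\frac{8n^2+4n}{8n^2+4n+1}<1.
\]
The curves $R_0$, $R_1$ give an even smaller ratio. So Theorem~\ref{theorem:Hamid-Ziquan-Kento} with a curve on $S_n$ cannot close the argument at $O_w$; this is exactly why the paper switches to a different device there. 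In Theorem~\ref{theorem:singular-w} it takes the weighted $(1,n)$-blow-up $\phi:\hat{S}_n\to S_n$ at $O_w$, bounds $S_{S_n,\frac{1}{2}W}(F)=\frac{4n+3}{4(4n+1)}$ for the exceptional divisor $F$ via Lemma~\ref{lemma:Fujita}, and then runs a direct multiplicity argument on $\hat{S}_n$ with the curves $\hat{L}_{xy}$, $\hat{R}_0$, $\hat{R}_1$, $F$. The passage to a prime divisor \emph{over} $S_n$ (rather than on it) is the missing idea in your outline.

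A secondary issue: taking $C=W$ at points of $W$ violates the hypothesis of Theorem~\ref{theorem:Hamid-Ziquan-Kento} as stated, since $\Omega+C=\frac{3}{2}W$ has a coefficient exceeding $1$ and is not purely log terminal. The paper handles smooth points on $W$ by the direct method of Theorem~\ref{theorem:smooth}, using the pencil curves $R\subset\{y=\gamma x^2\}$ together with two blow-ups when $R$ is tangent to $W$.
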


We remark here that $2M_\infty\#M_2$ is  already shown to admit a Sasaki-Einstein metric (\cite[Theorem~A]{BN10}).

\section{Proof of Theorem~\ref{theorem:KE-del}}

Due to Theorems~\ref{theorem:delta} and~\ref{theorem:delta-KE}, in order to prove Theorem~\ref{theorem:KE-del}, it is enough to show that
\[\delta\left(S_n,\frac{1}{2}W\right)>1.\]
In this section, we achieve this inequality by verifying 
 \[\delta_p\left(S_n,\frac{1}{2}W\right)\geq\frac{20n + 5}{20n +4}\]
 for each point $p$ in $S_n$.

The surface $S_n$ has four distinct singular points at $O_w=[0:0:0:1]$, $O_z=[0:0:1:0]$, $O_0=[0:1:0:0]$, and $O_1=[0:1:1:0]$.
These are all cyclic quotient singularities of types $\frac{1}{4n+1}(1, n)$, $\frac{1}{2n}(1,1)$, $\frac{1}{2}(1,1)$, and $\frac{1}{2}(1,1)$, respectively. 

Denote by $C_x$  the divisor on $S_n$ cut by $x=0$.
The divisor $C_x$ consists of three components. To be precise,
\[C_x=L_{xy}+R_0+R_1,\]
where $L_{xy}$ is defined by $x=y=0$, $R_0$ by $x=z=0$, and $R_1$ by $x=z-y^n=0$. 
Each pair of these three curves meet only at $O_w$.

\begin{center}
\begin{minipage}[m]{.25\linewidth}\setlength{\unitlength}{.90mm}
\begin{center}
\begin{picture}(100,50)(25,5)
\thicklines

\qbezier(90,50)(75,19)(10,20)

\qbezier(10,10)(92,18)(10,50)

\put(10,43){\line(1,0){90}}
\put(51,10){\line(0,1){45}}

\put(3,42){\mbox{ $W$}}
\put(52.5,12){\mbox{$R_{0}$}}

\put(3,10){\mbox{ $R_1$}}
\put(70,27){\mbox{ $L_{xy}$}}

\put(48.5,22.7){\mbox{ $\bullet$}}
\put(42,25.5){\mbox{ $O_w$}}

\put(48.5,42){\mbox{ $\bullet$}}
\put(44,46){\mbox{ $O_0$}}

\put(24,42){\mbox{ $\bullet$}}
\put(23,46){\mbox{ $O_1$}}

\put(83.2,42){\mbox{ $\bullet$}}
\put(80,46){\mbox{ $O_z$}}

\end{picture}
\end{center}
\end{minipage}
\end{center}

Their intersection numbers on $S_n$ are as follows:
\[
\label{equation:intersection-number}
\begin{split}
&L_{xy}^2=-\frac{4n-1}{2n(4n+1)},\ \ R_0^2=R_1^2=-\frac{2n+1}{2(4n+1)},\\ 
&L_{xy}\cdot R_{0}=L_{xy}\cdot R_{1}=\frac{1}{4n+1},\ \
 R_0\cdot R_{1}=\frac{n}{4n+1}.\\
 \end{split}
\]

The divisor  $-\left(K_{S_n}+\frac{1}{2}W\right)$ is  equivalent to $\frac{3}{2}C_x$ and its self-intersection number is 
\[\left(K_{S_n}+\frac{1}{2}W\right)^2=\frac{9(2n+1)}{8n(4n+1)}.\]

The irreducible curves $L_{xy}$, $R_0$, $R_1$ belong to the boundary of the pseudoeffective cone of $S_n$ since they are of negative self-intersection.
Therefore, for $t >\frac{3}{2}$, the divisor
\[\frac{3}{2}C_x-t L_{xy}= \left(\frac{3}{2}-t\right)L_{xy}+\frac{3}{2}R_0+\frac{3}{2}R_1\]
is not pseudoedffective. 
Set
\begin{equation}\label{equation:Zariski-L}
\aligned
&P_L(t):=\left\{%
\aligned
&\frac{3}{2}C_x-t L_{xy} \ \ \ \text{ for $0\leq t \leq\frac{3}{4}$},\\%
&\\
& \frac{3}{2}C_x-t L_{xy}-\frac{4t -3}{2}(R_0+R_1)
\ \ \ \text{ for $\frac{3}{4}\leq t \leq\frac{3}{2}$},\\
\endaligned\right.\\
&\\
&N_L(t):=\left\{%
\aligned
&0 \ \ \ \text{ for $0\leq t \leq\frac{3}{4}$},\\%
&\\
&\frac{4t -3}{2}(R_0+R_1) \ \ \ \text{ for $\frac{3}{4}\leq t \leq\frac{3}{2}$}.\\%
\endaligned\right.\\
\endaligned
\end{equation}

For each $i=0, 1$,
\[\left(\frac{3}{2}C_x-t L_{xy}\right)\cdot R_i=\frac{3-4t}{4(4n+1)}.\]
This implies that the divisor $P_L(t)$ is nef for $0\leq t \leq\frac{3}{4}$.  
Furthermore,  for $\frac{3}{4}\leq t \leq\frac{3}{2}$, $P_L(t)$
is a nef divisor with $P_L(t)\cdot R_0=P_L(t)\cdot R_1=0$.  Consequently, the Zariski decomposition of $\frac{3}{2}C_x-t L_{xy}$ is given by
\[\frac{3}{2}C_x-t L_{xy}=P_L(t)+N_L(t)\]
for $0 \leq t \leq\frac{3}{2}$.
We then see that the volume of $\frac{3}{2}C_x-t L_{xy}$ is
\begin{equation}\label{equation:volume-L}
\vol\left(\frac{3}{2}C_x-t L_{xy}\right)=\left\{%
\aligned
& -\frac{4(4n-1)t^2+12t-9(2n+1) }{8n(4n+1)}\ \ \ \text{ for $0\leq t \leq\frac{3}{4}$},\\%
&\\
& \frac{(3-2t)^2}{8n} \ \ \ \text{ for $\frac{3}{4}\leq t \leq\frac{3}{2}$},\\%
&\\
& 0  \ \ \ \text{ for $t \geq\frac{3}{2}$},\\
\endaligned\right.%
\end{equation}
and the function in \eqref{equation:S} is given by
\begin{equation}\label{equation:S-L}
    S_{S_n,\frac{1}{2}W}(L_{xy})=\frac{3n+1}{2(2n+1)}.
\end{equation}

Moreover, note that
\[
    \ord_p\left(N_L(t)\big\vert_{L_{xy}}\right)=0,
\]
for $p\in L_{xy}\setminus\{O_w\}$. Then, the function in \eqref{equation:h} is given by
\[\begin{split}
h_{p,L}(t) &=\int_0^\infty \vol\left(P_L(t)\big\vert_{L_{xy}}-vp\right)dv\\
&=\frac{1}{2}\left(P_L(t)\cdot L_{xy}\right)^2\\
&=\left\{\aligned
&\frac{1}{32n^2(4n+1)^2}\left(2(4n-1)t+3\right)^2 \ \ \ \text{ for $0\leq t\leq \frac{3}{4}$}\\
&\\
&\frac{1}{32n^2}(-2t+3)^2\ \ \ \text{ for $\frac{3}{4}\leq t\leq \frac{3}{2}$,}
\endaligned\right.
\end{split}\]
and the value in \eqref{equation:Kento-formula-S} for $p\in L_{xy}\setminus\{O_w\}$ is given by
\begin{equation}\label{equation:Kento-formula-S-L}
\begin{split}
    S(W_{\bullet,\bullet}^{L_{xy}};p)&=\frac{16n(4n+1)}{9(2n+1)}\int_0^\frac{3}{2}h_{p,L}(t)dt\\
    &=\frac{4n+1}{18n(2n+1)}\left\lbrace\int_0^\frac{3}{4}\frac{1}{(4n+1)^2}(2(4n-1)t+3)^2dt+\int_\frac{3}{4}^\frac{3}{2}(3-2t)^2dt\right\rbrace\\
    &=\frac{4n^2+3n+1}{4n(2n+1)(4n+1)}.
\end{split}\end{equation}

We now use indices $i, j $ such that  $\{i, j\}=\{0, 1\}$.
For $t>\frac{3}{2}$, the divisor
\[\frac{3}{2}C_x-t R_i= \frac{3}{2}L_{xy}+\left(\frac{3}{2}-t\right)R_i+\frac{3}{2}R_j\]
is not pseudoedffective.
Put
\begin{equation}\label{equation:Zariski-Ri}
\aligned
&P_{R_i}(t):=\left\{%
\aligned
&\frac{3}{2}C_x-t R_i\ \ \ \text{ for $0\leq t \leq\frac{3}{4n}$},\\%
&\\
&\frac{3}{2}C_x-t R_i-\frac{4nt -3}{2(2n-1)}(L_{xy}+R_j)
\ \ \ \text{ for $\frac{3}{4n}\leq t \leq\frac{3}{2}$}.\\%
\endaligned\right.\\
&\\
&N_{R_i}(t):=\left\{%
\aligned
&0 \ \ \ \text{ for $0\leq t \leq\frac{3}{4n}$},\\%
&\\
&\frac{4nt -3}{2(2n-1)}(L_{xy}+R_j) \ \ \ \text{ for $\frac{3}{4n}\leq t \leq\frac{3}{2}$}.\\%
\endaligned\right.\\
\endaligned
\end{equation}
We have
\[\left(\frac{3}{2}C_x-t R_i\right)\cdot L_{xy}=\frac{3-4nt}{4n(4n+1)}, \ \ \ \left(\frac{3}{2}C_x-t R_i\right)\cdot R_j=\frac{3-4nt}{4(4n+1)},\]
and hence we see that $P_{R_i}(t)$ is nef for $0\leq t \leq\frac{3}{4n}$.
 The divisor $P_{R_i}(t)$ is a nef divisor with $P_{R_i}(t)\cdot L_{xy}=P_{R_i}(t)\cdot R_j=0$ for  $\frac{3}{4n}\leq t \leq\frac{3}{2}$.
Therefore, the Zariski decomposition of $\frac{3}{2}C_x-t R_i$ is given by \eqref{equation:Zariski-Ri}.

Consequently, the volume is given by
\begin{equation}\label{equation:Volume-Ri}
\vol\left(\frac{3}{2}C_x-t R_i\right)=\left\{%
\aligned
&-\frac{4n(2n+1)t^2+12nt-9(2n+1)}{8n(4n+1)} \ \ \ \text{ for $0\leq t \leq\frac{3}{4n}$},\\%
&\\
& \frac{(3-2t)^2}{8(2n-1)} \ \ \ \text{ for $\frac{3}{4n}\leq t \leq\frac{3}{2}$},\\%
&\\
& 0  \ \ \ \text{ for $t \geq\frac{3}{2}$},\\
\endaligned\right.%
\end{equation}
and the function in \eqref{equation:S} is given by
\begin{equation}\label{equation:S-Ri}
    S_{S_n,\frac{1}{2}W}(R_i)=\frac{4n^2+3n+1}{4n(2n+1)}.
\end{equation}
For a point $p\in R_i\setminus\{O_w\}$, note that\[
    \ord_p\left(N_{R_i}(t)|_{R_i}\right)=0
\]
on $0\leq t\leq\frac{3}{2}$. Thus, for $p\in R_i\setminus\{O_w\}$ the function in \eqref{equation:h}  is given by
\[
\begin{split}
h_{p,R_i}(t)&=\int_0^\infty \vol\left(P_{R_i}(t)\big\vert_{R_i}-vp\right)dv\\
&=\frac{1}{2}(P_{R_i}(t)\cdot R_i)^2\\
&=\left\{\aligned 
&\frac{1}{2(4n+1)^2}\left(\left(\frac{1}{2}+n\right)t+\frac{3}{4}\right)^2\ \ \ \text{ for $0\leq t \leq\frac{3}{4n}$}\\
&\\
&\frac{1}{32(2n-1)^2}\left(-2t+3\right)^2 \ \ \ \text{ for $\frac{3}{4n}\leq t \leq\frac{3}{2}$,}\\
\endaligned\right.\\
\end{split}\]
and the value in \eqref{equation:Kento-formula-S} is given by
\begin{equation}\label{equation:Kento-formula-S-Ri}\begin{split}
    S(W^{R_i}_{\bullet,\bullet};p)&=\frac{16(4n+1)}{9(2n+1)}\int_0^\frac{3}{2}h_{p,R_i}(t)dt\\
    &=\frac{16n(4n+1)}{9(2n+1)}\Bigg\{\int_0^\frac{3}{4n}\frac{1}{(2(4n+1)^2}\left(\left(\frac{1}{2}+n\right)t+\frac{3}{4}\right)^2dt\\
    &\qquad\qquad\qquad\qquad\qquad\qquad+\int_\frac{3}{4n}^\frac{3}{2}\frac{1}{32(2n-1)^2}(-2t+3)^2dt\Bigg\}\\
    &=\frac{8n^2+7n+1}{8n(2n+1)(4n+1)}.
\end{split}\end{equation}

Let $C_\gamma$ be the curve on $S_n$ cut by $y=\gamma x^2$ for a constant $\gamma$. It consists of two irreducible curves. One is $L_{xy}$ and the other is the curve $R$ defined by $$y-\gamma x^2=w+\gamma xz(z-\gamma^nx^{2n})+zA_{2n+1}(x, \gamma x^2)+A_{4n+1}(x, \gamma x^2)=0.$$
Their intersection numbers are as follows:
\[R^2=\frac{1}{2n}, \ \ \ L_{xy}\cdot R=\frac{1}{2n}.\]
Also we see that $W$ and $R$ meets at $O_z$ with local intersection number $\frac{1}{2n}$ and
\[W\cdot R=2+\frac{1}{2n}.\]
Besides the singular point $O_z$, the curve $R$ meets $W$ either transversally at two distinct smooth points or  tangentially at a single smooth point with local intersection number $2$.

Since $L_{xy}$ is of negative self-intersection, $\frac{3}{4}C_\gamma-tR$ is not pseudoeffective for $t>\frac{3}{4}$. Put
\begin{equation}\label{equation:Zariski-R}
\aligned
&P_R(t):=\left\{
\aligned
& \frac{3}{4}C_\gamma-tR\ \ \ \text{ for $0\leq t\leq\frac{3}{2(4n+1)}$},\\
&\\
&\frac{3}{4}C_\gamma-tR-\frac{2(4n+1)t-3}{2(4n-1)}L_{xy}
\ \ \ \text{ for $\frac{3}{2(4n+1)}\leq t\leq \frac{3}{4}$}.\\
\endaligned\right.\\
&\\
&N_R(t):=\left\{
\aligned
&0 \ \ \ \text{ for $0\leq t\leq\frac{3}{2(4n+1)}$},\\
&\\
&\frac{2(4n+1)t-3}{2(4n-1)}L_{xy} \ \ \ \text{ for $\frac{3}{2(4n+1)}\leq t\leq \frac{3}{4}$}.\\
\endaligned\right.\\
\endaligned
\end{equation}
Then, we have
\[
    P_R(t)\cdot L_{xy}=\left\{\aligned
    &\frac{1}{2n}\left(\frac{3}{2(4n+1)}-t\right)\ \ \ \text{ for $0\leq t\leq\frac{3}{2(4n+1)}$},\\
    &\\
    &0\ \ \ \text{ for $\frac{3}{2(4n+1)}\leq t\leq\frac{3}{4}$},\\
    \endaligned\right.
\]
and hence we see that $P_R(t)$ is nef for $0\leq t\leq\frac{3}{4}$. Consequently, the Zariski decomposition of $\frac{3}{4}C_\gamma-tR$ is given by \eqref{equation:Zariski-R}
for $0\leq t\leq\frac{3}{4}$. Thus, the volume  is given by

\begin{equation}\label{equation:volume-R}
\vol\left(\frac{3}{2}C_\gamma-t R\right)=\left\{%
\aligned
&\frac{4(4n+1)t^2-12(4n+1)t+9(2n+1)}{8n(4n+1)} \ \ \ \text{ for $0\leq t\leq\frac{3}{2(4n+1)}$},\\%
&\\
& \frac{(3-4t)^2}{4(4n-1)} \ \ \ \text{ for $\frac{3}{2(4n+1)}\leq t\leq\frac{3}{4}$},\\%
&\\
& 0  \ \ \ \text{ for $t \geq\frac{3}{4}$},\\
\endaligned\right.%
\end{equation}
and the value in \eqref{equation:S} is given by

\begin{equation}\label{equation:S-R}
    S_{S_n,\frac{1}{2}W}(R)=\frac{16n^3+16n^2+7n+1}{2(2n+1)(4n+1)^2}.
\end{equation}

We now consider an effective $\mathbb{Q}$-divisor $D$  numerically equivalent to $-\left(K_{S_n}+\frac{1}{2}W\right)$. We may write
\begin{equation}\label{equation:D-ab}
D=aW+bL_{xy}+b_0R_0+b_1R_1+\Delta,
\end{equation}
where $a$, $b$, and $b_i$ are non-negative rational numbers and $\Delta$ is an effective $\mathbb{Q}$-divisor whose support contains none of the curves $W$, $L_{xy}$, $R_0$, $R_1$.
Also we may write
\begin{equation}\label{equation:D-c}
D=aW+bL_{xy}+cR+\Omega,
\end{equation}
where $\Omega$ is an $\mathbb{Q}$-effective divisor whose support contains none of $W$, $L_{xy}$, $R$.

\begin{lemma}\label{lemma:estimate-ab} For a sufficiently large integer $m$, suppose that $D$ is of $m$-basis type  with respect to the log del Pezzo surface $\left(S_n,\frac{1}{2}W\right)$.
Then
 $$a<\frac{1}{8n}; \ \ b<\frac{3n+2}{2(2n+1)}; \ \  b_0, b_1 <\frac{8n^2+6n+3}{8n(2n+1)}; \ \ c<\frac{3}{10}.$$
\end{lemma}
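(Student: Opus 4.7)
The plan is to apply Lemma~\ref{lemma:Fujita} separately to each of the prime divisors $W$, $L_{xy}$, $R_0$, $R_1$, and $R$ that carry a named coefficient in \eqref{equation:D-ab} and \eqref{equation:D-c}. Each application produces an inequality of the form $\ord_C(D)\leq S_{S_n,\frac{1}{2}W}(C)+\epsilon$, valid for every $m$-basis type divisor once $m$ is sufficiently large, and the claimed strict inequalities will follow by taking $\epsilon$ smaller than the (positive) gap between the corresponding $S$-invariant and the target bound.

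For $C=L_{xy}$, $R_0$, $R_1$, and $R$, the relevant $S$-invariants have already been evaluated in \eqref{equation:S-L}, \eqref{equation:S-Ri}, and \eqref{equation:S-R}. Direct comparison reveals the positive gaps
\[\frac{3n+2}{2(2n+1)}-\frac{3n+1}{2(2n+1)}=\frac{1}{2(2n+1)},\qquad \frac{8n^2+6n+3}{8n(2n+1)}-\frac{4n^2+3n+1}{4n(2n+1)}=\frac{1}{8n(2n+1)},\]
and an elementary polynomial manipulation shows $\frac{16n^3+16n^2+7n+1}{2(2n+1)(4n+1)^2}<\frac{3}{10}$ for every $n\geq 2$. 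Choosing $\epsilon$ less than the minimum of these gaps produces the bounds on $b$, $b_0$, $b_1$, and $c$.

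The new ingredient is $S_{S_n,\frac{1}{2}W}(W)$. Since $W$ is linearly equivalent to the hyperplane class $(4n+1)C_x$ on $S_n$, the divisor $\frac{3}{2}C_x-tW$ is numerically equivalent to $\bigl(\frac{3}{2}-(4n+1)t\bigr)C_x$, so it is pseudoeffective precisely on $t\in[0,\tau]$ with $\tau=\frac{3}{2(4n+1)}$. A quick intersection computation gives $C_x\cdot L_{xy}=\frac{1}{2n(4n+1)}$ and $C_x\cdot R_i=\frac{1}{2(4n+1)}$, both positive; granting that $L_{xy}$, $R_0$, $R_1$ exhaust the extremal rays of the Mori cone of the Mori dream space $S_n$, the class $C_x$ is nef. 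The Zariski decomposition of $\frac{3}{2}C_x-tW$ on $[0,\tau]$ then has trivial negative part, so
\[\vol\bigl(\tfrac{3}{2}C_x-tW\bigr)=\bigl(\tfrac{3}{2}-(4n+1)t\bigr)^{2}\cdot\frac{2n+1}{2n(4n+1)},\]
and direct integration yields $S_{S_n,\frac{1}{2}W}(W)=\frac{1}{2(4n+1)}<\frac{1}{8n}$. Shrinking $\epsilon$ accordingly produces $a<\frac{1}{8n}$.

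The main obstacle is verifying that $C_x$ is nef, equivalently that no irreducible curve on $S_n$ besides $L_{xy}$, $R_0$, $R_1$ has negative self-intersection. This should follow either from a careful Mori cone analysis or from a direct geometric argument using the weighted projective embedding of $S_n$; once secured, the remaining bounds reduce to mechanical comparisons against the already computed $S$-invariants.
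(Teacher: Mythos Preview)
Your proposal is correct and follows exactly the paper's approach: apply Lemma~\ref{lemma:Fujita} to each of $W$, $L_{xy}$, $R_0$, $R_1$, $R$, using the already computed $S$-invariants \eqref{equation:S-L}, \eqref{equation:S-Ri}, \eqref{equation:S-R} for the last four, and computing $S_{S_n,\frac{1}{2}W}(W)=\frac{1}{2(4n+1)}$ directly for the first.

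The one place where you make life harder than necessary is what you call ``the main obstacle'': verifying that $C_x$ is nef. There is nothing to verify here. The class of $C_x$ is $\mathcal{O}_{S_n}(1)$, the restriction of the ample $\mathbb{Q}$-line bundle $\mathcal{O}_{\mathbb{P}(1,2,2n,4n+1)}(1)$ from the ambient weighted projective space, so $C_x$ is ample, not merely nef. No Mori cone analysis or classification of negative curves on $S_n$ is required. Once you note this, the Zariski decomposition of $\frac{3}{2}C_x-tW\equiv\bigl(\tfrac{3}{2}-(4n+1)t\bigr)C_x$ is trivially its own positive part on $[0,\tfrac{3}{2(4n+1)}]$ and the divisor ceases to be pseudoeffective beyond that point, giving exactly the volume and $S$-value you wrote down. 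The paper proceeds in precisely this way, writing the integrand directly as $\bigl(1-\tfrac{2(4n+1)}{3}t\bigr)^2$ without further comment.
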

\begin{proof}
The first inequality immediately follows from 
\[\frac{1}{D^2}\int^{\infty}_0\vol\left(D-t W\right)dt=
\int^{\frac{3}{2(4n+1)}}_0\left(1-\frac{2(4n+1)}{3}t\right)^2dt=\frac{1}{2(4n+1)}<\frac{1}{8n}\]
via Lemma~\ref{lemma:Fujita}.

Also, it follows from  \eqref{equation:S-L},  \eqref{equation:S-Ri}, \eqref{equation:S-R}, and Lemma~\ref{lemma:Fujita}  that 
\[\begin{split}
&b\leq S_{S_n,\frac{1}{2}W}(L_{xy})+\epsilon=\frac{3n+1}{2(2n+1)}+\epsilon<\frac{3n+2}{2(2n+1)},\\
&b_i\leq S_{S_n,\frac{1}{2}W}(R_i)+\epsilon=\frac{4n^2+3n+1}{4n(2n+1)}+\epsilon<\frac{8n^2+6n+3}{8n(2n+1)},\\
&c\leq S_{S_n,\frac{1}{2}W}(R)+\epsilon=\frac{16n^3+16n^2+7n+1}{2(2n+1)(4n+1)^2}+\epsilon<\frac{3}{10},
\end{split}\]
where $\epsilon$ is a sufficiently small positive rational number.
\end{proof}

From now on, we put $\lambda = \frac{20n + 5}{20n +4}$.

\begin{theorem}\label{theorem:smooth}
For a smooth point $p$,
 $$\delta_p\left(S_n, \frac{1}{2}W\right)\geq \lambda.$$ \end{theorem}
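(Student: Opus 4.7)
The plan is to apply Theorem~\ref{theorem:Hamid-Ziquan-Kento} to an irreducible curve $C$ through $p$, chosen adaptively based on the location of $p$. Specifically, I would split into three cases: (i) $p\in L_{xy}$, hence necessarily away from $O_w$ and $O_z$; (ii) $p\in R_i$ for some $i\in\{0,1\}$, away from the singular locus; and (iii) $p$ off $L_{xy}\cup R_0\cup R_1$, in which case $x(p)\neq 0$, and I set $\gamma=y(p)/x(p)^2$ and take $C=R_\gamma$, the residual component of $C_\gamma=L_{xy}+R_\gamma$.

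For case (i), I would take $C=L_{xy}$. Because $p$ avoids the singular locus of $S_n$ and the intersection $L_{xy}\cap W$, the log pair $\left(S_n,\tfrac{1}{2}W+L_{xy}\right)$ is purely log terminal at $p$ and the different $\Delta$ vanishes at $p$, so $A_{L_{xy},\Delta}(p)=1$. Plugging \eqref{equation:S-L} and \eqref{equation:Kento-formula-S-L} into the theorem yields the lower bound $\min\!\left\{\tfrac{2(2n+1)}{3n+1},\,\tfrac{4n(2n+1)(4n+1)}{4n^2+3n+1}\right\}$, and a cross-multiplication check shows both quantities exceed $\lambda=\tfrac{20n+5}{20n+4}$ for every $n\geq 2$. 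Case (ii) is entirely analogous, this time using \eqref{equation:S-Ri} and \eqref{equation:Kento-formula-S-Ri}; the dominating bound is $\tfrac{4n(2n+1)}{4n^2+3n+1}$, which again exceeds $\lambda$.

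Case (iii) is more delicate. With $C=R_\gamma$, I would compute $S\!\left(W^{R_\gamma}_{\bullet,\bullet};p\right)$ via the Zariski decomposition~\eqref{equation:Zariski-R} in the same way as before: since $N_R(t)$ is supported on $L_{xy}$ and $p\notin L_{xy}$, the $\mathrm{ord}_p$-contribution in~\eqref{equation:h} vanishes and $h_{p,R_\gamma}(t)=\tfrac{1}{2}(P_R(t)\cdot R_\gamma)^2$, reducing to two elementary integrals over the two intervals of the piecewise Zariski decomposition. The bound $1/S_{S_n,\frac{1}{2}W}(R_\gamma)>\tfrac{10}{3}$ is immediate from Lemma~\ref{lemma:estimate-ab}. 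When $p\notin W$, the different on $R_\gamma$ vanishes at $p$, so $A_{R_\gamma,\Delta}(p)=1$, and the resulting rational expression in $n$ beats $\lambda$.

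The main obstacle is the subcase of (iii) where $p\in W$. Here the different on $R_\gamma$ acquires a contribution $\tfrac{1}{2}(W\cdot R_\gamma)_p\cdot p$, so $A_{R_\gamma,\Delta}(p)=1-\tfrac{1}{2}(W\cdot R_\gamma)_p$. At a transversal intersection this equals $\tfrac{1}{2}$, which combined with the small $S\!\left(W^{R_\gamma}_{\bullet,\bullet};p\right)$ should still clear $\lambda$ by explicit calculation. The genuine difficulty is the tangential case $(W\cdot R_\gamma)_p=2$, forcing $A_{R_\gamma,\Delta}(p)=0$ and rendering Theorem~\ref{theorem:Hamid-Ziquan-Kento} vacuous along $R_\gamma$. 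Handling these finitely many tangent points will require either replacing $R_\gamma$ by a different curve through $p$ and redoing the Zariski and $S$-invariant computations, or arguing directly via the decomposition~\eqref{equation:D-c}, using Lemma~\ref{lemma:estimate-ab} to bound $c$ strictly below $1$ and then applying inversion of adjunction (Lemma~\ref{lemma:inversion-of-adjunction}) along $R_\gamma$ with an intersection estimate that refines the global bound by exploiting the local tangent geometry at $p$.
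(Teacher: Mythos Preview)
Your route differs from the paper's: you apply the Abban--Zhuang inequality (Theorem~\ref{theorem:Hamid-Ziquan-Kento}) with an adaptive curve $C$, whereas the paper runs a direct contradiction argument on an $m$-basis type divisor, using inversion of adjunction along $L_{xy}$, $R_i$, and $R_\gamma$, and in the tangential case two successive blow-ups. The paper in fact remarks (right after its proof) that an Abban--Zhuang style argument is possible, so your overall strategy is legitimate. For cases (i), (ii), and the part of (iii) with $p\notin W$ or $p\in W$ transversal, your computations line up with the $S$-invariants already tabulated, and the minimum in Theorem~\ref{theorem:Hamid-Ziquan-Kento} does exceed $\lambda$; this is arguably cleaner than the paper's case-by-case inversion of adjunction.

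The genuine gap is the tangential subcase of (iii), and neither of your two proposed fixes works as stated. Your suggestion~(b) --- bound $c<1$ and apply Lemma~\ref{lemma:inversion-of-adjunction} along $R_\gamma$ --- fails outright: with $(W\cdot R_\gamma)_p=2$, inversion of adjunction along $R_\gamma$ gives only
\[
2\Bigl(\tfrac{1}{2}+\lambda a\Bigr)+\lambda(\Omega\cdot R_\gamma)_p>1,
\]
which is vacuous since the left side already exceeds $1$. Inversion along $W$ fails similarly. What the paper actually does here is blow up $p$, then blow up the unique point $q$ where $\widetilde W$, $\widetilde R$, and $E$ still meet, and only on the \emph{second} exceptional curve $F$ do the three proper transforms separate; one then checks four local inequalities on $F$, each controlled by $\operatorname{mult}_p(\Omega)\leq \Omega\cdot R\leq \tfrac{3}{4n}$ and the bounds $a<\tfrac{1}{8n}$, $c<\tfrac{3}{10}$. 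Your suggestion~(a) --- swap $R_\gamma$ for another curve on $S_n$ --- does not obviously help either: no low-degree curve through $p$ avoids the tangency with $W$, and using $W$ itself gives $A_{W,\Delta}(p)=\tfrac{1}{2}$ against a larger $S$-value. The viable Abban--Zhuang alternative is to take $C$ to be the exceptional divisor of the $(1,2)$-weighted blow-up at $p$ (matching the tangency), i.e.\ the ``general version'' of Theorem~\ref{theorem:Hamid-Ziquan-Kento} the paper alludes to; but that is a new computation you have not carried out. As written, the tangential case is unproven.
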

\begin{proof}
With a sufficiently large positive integer $m$, let $D$ be a $\mathbb{Q}$-divisor of $m$-basis type with respect to the log del Pezzo surface $(S_n, \frac{1}{2}W)$. It is enough to show that the log pair 
$$\left(S_n, \frac{1}{2}W+\lambda D\right)$$
is log canonical on the smooth locus of $S_n$.

Suppose that the log pair 
is not log canonical at a smooth point $p$.

We write the divisor $D$ as \eqref{equation:D-ab}, i.e., 
\[D=aW+bL_{xy}+b_0R_0+b_1R_1+\Delta,\]
where $a$, $b$,  and $b_i$ are non-negative rational numbers and $\Delta$ is an effective $\mathbb{Q}$-divisor whose support contains none of the curves $W$, $L_{xy}$, $R_0$, $R_1$. Lemma~\ref{lemma:estimate-ab} shows that $$ b<\frac{4}{5}; \ \  b_0, b_1 <\frac{3}{5}.$$

Suppose that the point $p$ lies on $L_{xy}$.
Since $\lambda b\leq 1$ and~$p\not\in W\cup R_0\cup R_1$, the log pair
\[\left(S_n, L_{xy}+\lambda\Delta\right)\]
is not log canonical at $p$. We then obtain a contradiction from  Lemma~\ref{lemma:inversion-of-adjunction} and the inequality
\[
\Delta\cdot L_{xy}=
\left( D-aW-bL_{xy}-b_0R_0-b_1R_1\right)\cdot L_{xy}
\leq\left( D-bL_{xy}\right)\cdot L_{xy} 
=\frac{3+2b(4n-1)}{4n(4n+1)}<\frac{1}{\lambda}.\]

We now suppose that the point $p$ lies on $R_i$.
Since $\lambda b_i\leq 1$ and~$p\not\in L_{xy}\cup W\cup R_j$, where $i\ne j$, the log pair
\[\left(S_n, R_i+\lambda\Delta\right)\]
is not log canonical at $p$. This also yields an absurd inequality 
\[\Delta\cdot R_i=
 \left( D-aW-bL_{xy}-b_0R_0-b_1R_1\right)\cdot R_i\leq (D\cdot R_i-R_i^2)=\frac{4n+5}{4(4n+1)}<\frac{1}{\lambda}.\]

Therefore, the point $p$ must be located outside the curves $C_x$.

Let $C$ be a curve in the pencil $|\mathcal{O}_{S_n}(2)|$ that passes through the point $p$. Since the curve $C$ is cut by $y=\gamma x^2$ for some constant $\gamma$,  it consists of two irreducible curves $L_{xy}$ and $R$. 
As in \eqref{equation:D-c}, we now may write
\[D=aW+bL_{xy}+cR+\Omega,\]
where $\Omega$ is an $\mathbb{Q}$-effective divisor whose support contains none of $W$, $L_{xy}$, $R$.
Lemmas~\ref{lemma:estimate-ab} implies that $$a<\frac{1}{8n}; \ \ b< \frac{4}{5}; \ \  c <\frac{3}{10}.$$

The log pair 
\[\left(S_n, \left(\frac{1}{2}+\lambda a\right)W+ \lambda bL_{xy}+\lambda cR+\lambda\Omega\right)\]
is not log canonical at $p$. 

Suppose that $p\not\in W$. Since   $\lambda c\leq 1$, the log pair
\[\left(S_n, R+\lambda\Omega\right)\]
is not log canonical at $p$ either. Lemma~\ref{lemma:inversion-of-adjunction} then implies an absurd inequality
\[\frac{1}{\lambda}< \Omega\cdot R=(D-aW-bL_{xy}-cR)\cdot R\leq D\cdot R=\frac{3}{4n}.\]
This means that the point $p$ must belong to $W$.
Then the log pair
\[\left(S_n, \left(\frac{1}{2}+\lambda a\right)W+R+\lambda\Omega\right)\]
is not log canonical at $p$

The curve $R$ meets $W$ at $p$ either transversally or tangentially. When they meet at $p$ tangentially, their local intersection number
at $p$ is  $2$.

We first consider the case when the curve $R$ meets $W$ at $p$ transversally. In this case, we can easily obtain a contradiction,
\[\begin{split}
1&<\left(\left(\left(\frac{1}{2}+\lambda a\right)W+\lambda\Omega\right)\cdot R\right)_p\\
&\leq \left(\frac{1}{2}+\lambda a\right)+\lambda\Omega\cdot R \\
&=\left(\frac{1}{2}+\lambda a\right)+\lambda (D-aW-bL_{xy}-cR)\cdot R\\
&\leq \frac{1}{2}+\lambda D\cdot R=\frac{1}{2}+\lambda\frac{3}{4n}<1
\end{split}\]
from Lemma~\ref{lemma:inversion-of-adjunction}.
Therefore, the curve $R$ meets $W$ at $p$ with local intersection number $2$. Note that
\[\mathrm{mult}_p(\Omega)\leq \Omega\cdot R\leq \frac{3}{4n}.\]

 Let $\phi:\widetilde{S}_n\to S_n$ be the blow up at $p$ and $E$ be its exceptional curve. Then
\[\phi^*\left(K_{S_n}+\left(\frac{1}{2}+\lambda a\right)W+ \lambda bL_{xy}+\lambda cR+\lambda\Omega\right)\]
\[=K_{\widetilde{S}_n}+\left(\frac{1}{2}+\lambda a\right)\widetilde{W}+ \lambda b\widetilde{L}_{xy}+\lambda c\widetilde{R}+\lambda\widetilde{\Omega}+dE,\]
where $\widetilde{W}$, $\widetilde{L}_{xy}$, $\widetilde{R}$, and $\widetilde{\Omega}$ are 
the proper transforms of $W$, $L_{xy}$, $R$, and $\Omega$ respectively. Here 
$d=\lambda(a+c)+\lambda \mathrm{mult}_p(\Omega)-\frac{1}{2}$.
Since $d\leq 1$ and $\lambda \mathrm{mult}_p(\Omega)\leq 1$, the log pair
\[\left(\widetilde{S}_n, \left(\frac{1}{2}+\lambda a\right)\widetilde{W}+ \lambda b\widetilde{L}_{xy}+\lambda c\widetilde{R}+\lambda\widetilde{\Omega}+dE\right)\]
is not log canonical at the point $q$ where $E$, $\widetilde{R}$, and $\widetilde{W}$ meet.
Let $\psi:\overline{S}_n\to\widetilde{S}_n$ be the blow up at the point $q$ and let $F$ be the exceptional curve of $\psi$.
Denote  the proper transforms of 
$\widetilde{W}$, $\widetilde{L}_{xy}$, $\widetilde{R}$, $\widetilde{\Omega}$, and $E$ by $\overline{W}$, $\overline{L}_{xy}$, $\overline{R}$,  $\overline{\Omega}$, and $\overline{E}$, respectively. 
Then
\[\psi^*\left(K_{\widetilde{S}_n}+\left(\frac{1}{2}+\lambda a\right)\widetilde{W}+ \lambda b\widetilde{L}_{xy}+\lambda c\widetilde{R}+\lambda\widetilde{\Omega}+dE\right)\]
\[=K_{\widetilde{S}_n}+\left(\frac{1}{2}+\lambda a\right)\overline{W}+ \lambda b\overline{L}_{xy}+\lambda c\overline{R}+\lambda\overline{\Omega}+d\overline{E}+eF,\]
where $e=\lambda (a+c)+d+\lambda \mathrm{mult}_q(\widetilde{\Omega})-\frac{1}{2}$.

Since $$e=2\lambda (a+c)+\lambda \left(\mathrm{mult}_p(\Omega)+\mathrm{mult}_q(\widetilde{\Omega})\right)-1\leq 
2\lambda \left(a+c+\mathrm{mult}_p(\Omega)\right)-1\leq 1,$$
the log pair 
\[ \left(\widetilde{S}_n, \left(\frac{1}{2}+\lambda a\right)\overline{W}+ \lambda b\overline{L}_{xy}+\lambda c\overline{R}+\lambda\overline{\Omega}+d\overline{E}+F\right)\]
is not log canonical at a point on $F$. Note that the curves  $\overline{W}$, $\overline{R}$, and $\overline{E}$ meet $F$ transversally at distinct points. However, the inequalities
\[\lambda \overline{\Omega}\cdot F=\lambda \mathrm{mult}_q(\widetilde{\Omega})\leq \lambda \mathrm{mult}_p(\Omega)\leq \frac{3\lambda}{4n}<1,\]
\[\left(\frac{1}{2}+\lambda a\right)+\lambda \overline{\Omega}\cdot F\leq \left(\frac{1}{2}+\lambda a\right)+\frac{3\lambda}{4n}<1,\]
\[\lambda c+\lambda \overline{\Omega}\cdot F\leq \lambda c+\frac{3\lambda}{4n}<1,\]
\[d+\lambda \overline{\Omega}\cdot F=\lambda(a+c)+2\lambda \mathrm{mult}_p(\Omega)-\frac{1}{2}<1\]
imply that 
the log pair above is log canonical  along the curve $F$ by from Lemma~\ref{lemma:inversion-of-adjunction}. This is a contradiction. Consequently, the log pair
$\left(S_n, \frac{1}{2}W+\lambda D\right)$ must be log canonical in the smooth locus of $S_n$.
\end{proof}

\begin{remark}
Another way to verify Theorem~\ref{theorem:smooth}  is  to apply a general version of Theorem~\ref{theorem:Hamid-Ziquan-Kento} as in the proof of Theorem~\ref{theorem:singular} below.  
We however use a direct method that is a bit simpler and demonstrate an instructive and basic approach to estimations of $\delta$-invariants.
\end{remark}

\begin{theorem}\label{theorem:singular}
For singular points $p=O_z$, $O_0$, and $O_1$, 
 $$\delta_p\left(S_n, \frac{1}{2}W\right)\geq \lambda.$$ \end{theorem}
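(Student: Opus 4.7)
The plan is to apply Theorem~\ref{theorem:Hamid-Ziquan-Kento} with a judicious choice of curve $C$ through each singular point $p$: for $p=O_z$ take $C=L_{xy}$, and for $p=O_i$ with $i\in\{0,1\}$ take $C=R_i$. First I would verify that each pair $(S_n,\frac{1}{2}W+C)$ is purely log terminal. By inversion of adjunction this reduces to checking that $\mathrm{Diff}_C(\frac{1}{2}W)$ is a klt divisor on $C$ at every singular point of $S_n$ lying on $C$, which is a direct calculation via the orbifold adjunction formula using the local intersection numbers $(W\cdot L_{xy})_{O_z}=1/(2n)$ and $(W\cdot R_i)_{O_i}=1/2$ (while $W$ misses $O_w$).

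The key structural observation is that in the Zariski decompositions~(\ref{equation:Zariski-L}) and~(\ref{equation:Zariski-Ri}) the negative parts are supported on components of $C_x$ that do not pass through the chosen point: $R_0$ and $R_1$ avoid $O_z$, while $L_{xy}$ and $R_{1-i}$ avoid $O_i$. Consequently $\ord_p(N(u)|_C)=0$ throughout, so the function $h(u)$ in~(\ref{equation:h}) reduces to $\frac{1}{2}(P(u)\cdot C)^2$, exactly as at a smooth point. This forces
\[
S(W^{L_{xy}}_{\bullet,\bullet};O_z)=\frac{4n^2+3n+1}{4n(2n+1)(4n+1)},\qquad S(W^{R_i}_{\bullet,\bullet};O_i)=\frac{8n^2+7n+1}{8n(2n+1)(4n+1)},
\]
the same expressions that were computed for smooth points on the respective curves in~(\ref{equation:Kento-formula-S-L}) and~(\ref{equation:Kento-formula-S-Ri}).

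The log discrepancies are immediate from~(\ref{equation:dicr}): $A_{L_{xy},\Delta}(O_z)=\frac{1}{2n}-\frac{1}{4n}=\frac{1}{4n}$ and $A_{R_i,\Delta}(O_i)=\frac{1}{2}-\frac{1}{4}=\frac{1}{4}$. Combined with $1/S_{S_n,\frac{1}{2}W}(L_{xy})=\frac{2(2n+1)}{3n+1}$ and $1/S_{S_n,\frac{1}{2}W}(R_i)=\frac{4n(2n+1)}{4n^2+3n+1}$ from~(\ref{equation:S-L}) and~(\ref{equation:S-Ri}), Theorem~\ref{theorem:Hamid-Ziquan-Kento} gives
\[
\delta_{O_z}\geq\min\left\{\frac{2(2n+1)}{3n+1},\,\frac{(2n+1)(4n+1)}{4n^2+3n+1}\right\},\quad \delta_{O_i}\geq\min\left\{\frac{4n(2n+1)}{4n^2+3n+1},\,\frac{2n(2n+1)(4n+1)}{8n^2+7n+1}\right\}.
\]
The proof concludes by verifying, via elementary polynomial inequalities in $n$ such as $20n^2+21n+3\geq 0$, $80n^3+32n^2-19n-5\geq 0$, $20n^2+13n-1\geq 0$, and $80n^3+16n^2-27n-5\geq 0$, that each of the four relevant ratios is at least $\lambda=(20n+5)/(20n+4)$ for $n\geq 2$.

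The main obstacle is pinning down the correct interpretation of $\vol(P(u)|_C - vp)$ when $p$ is an orbifold point of $C$: the integral must be read as a volume on the coarse curve underlying $C$, so that the orbifold nature of $p$ is captured once and for all by the factor $A_{C,\Delta}(p)$ via~(\ref{equation:dicr}) and is not duplicated inside the volume. Only under this reading does the smooth-case expression $\frac{1}{2}(P(u)\cdot C)^2$ remain valid at $p$ and the lower bound $A/S$ retain enough strength to exceed $\lambda$.
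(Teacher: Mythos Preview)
Your proposal is correct and follows essentially the same route as the paper: apply Theorem~\ref{theorem:Hamid-Ziquan-Kento} with $C=L_{xy}$ at $O_z$ and $C=R_i$ at $O_i$, use that the negative parts of the Zariski decompositions in~(\ref{equation:Zariski-L}) and~(\ref{equation:Zariski-Ri}) miss these points so that $S(W^C_{\bullet,\bullet};p)$ coincides with the values already recorded in~(\ref{equation:Kento-formula-S-L}) and~(\ref{equation:Kento-formula-S-Ri}), compute the log discrepancies via~(\ref{equation:dicr}), and check the resulting rational functions of $n$ exceed $\lambda$. Your added remarks (the PLT verification and the discussion of how to read $\vol(P(u)|_C-vp)$ at an orbifold point of $S_n$) are not needed for the argument---$L_{xy}$ and $R_i$ are smooth rational curves, so $p$ is an ordinary point of $C$ and the integral is the naive one---but they do no harm.
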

\begin{proof}
For the singularity $O_z$, we obtain the log discrepancy 
\[A_{L_{xy}, \Delta}(O_z)=1-\ord_{O_z}(\Delta_L)=\frac{1}{4n}\]
from the adjunction formula
\[\left.\left(K_{S_n}+\frac{1}{2}W+L_{xy}\right)\right\vert_{L_{xy}}=K_{L_{xy}}+\Delta_L.\]
Then, \eqref{equation:S-L}, \eqref{equation:Kento-formula-S-L}, and Theorem~\ref{theorem:Hamid-Ziquan-Kento} imply that
\[\delta_{O_z}\left(S_n,\frac{1}{2}W\right)\geq\min\left\{\frac{2(2n+1)}{3n+1},\frac{(2n+1)(4n+1)}{4n^2+3n+1}\right\}>\lambda.\]

Similarly, for the singularity $O_i$, we obtain the log discrepancies
\[A_{R_i, \Delta}(O_i)=1-\ord_{O_i}\left(\Lambda_i\right)=\frac{1}{4}\]
from the adjunction formula
\[\left.\left(K_{S_n}+\frac{1}{2}W+R_i\right)\right\vert_{R_i}=K_{R_i}+\Lambda_i.\]
Theorem~\ref{theorem:Hamid-Ziquan-Kento} with \eqref{equation:S-Ri} and \eqref{equation:Kento-formula-S-Ri} then yields
\[\delta_{O_i}\left(S_n,\frac{1}{2}W\right)\geq\min\left\{\frac{4n(2n+1)}{4n^2+3n+1},\frac{2n(2n+1)(4n+1)}{8n^2+7n+1}\right\}>\lambda.\]
\end{proof}

\begin{theorem}\label{theorem:singular-w}
For the singular point $O_w$, 
 $$\delta_{O_w}\left(S_n, \frac{1}{2}W\right)\geq \lambda.$$ \end{theorem}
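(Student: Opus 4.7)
The plan is to run an Abban--Zhuang type local estimate at $O_w$, but the naïve one-step flag will not suffice. The point $O_w$ is a cyclic quotient singularity of type $\tfrac{1}{4n+1}(1,n)$, and all three irreducible components $L_{xy},R_0,R_1$ of the boundary $C_x$ pass through $O_w$ with intersection numbers $L_{xy}\cdot R_0=L_{xy}\cdot R_1=\tfrac{1}{4n+1}$ and $R_0\cdot R_1=\tfrac{n}{4n+1}$. If one plugs $C=L_{xy}$ into Theorem~\ref{theorem:Hamid-Ziquan-Kento} with $p=O_w$, the adjunction gives only $A_{L_{xy},\Delta_L}(O_w)=\tfrac{1}{4n+1}$ (since $W$ does not pass through $O_w$), while the local $S$-term gathers the contributions of $\tfrac{4t-3}{2}(R_0+R_1)\big|_{L_{xy}}$ at $O_w$ coming from \eqref{equation:Zariski-L}. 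A direct computation shows that the resulting ratio $A/S$ is strictly smaller than $\lambda$, and the analogous attempt with $C=R_0$ or $R_1$ gives an even weaker bound. Thus a deeper flag is required.

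My plan is to replace the curve flag by a divisorial valuation obtained from a $(1,n)$-weighted blow-up $\sigma\colon\hat S_n\to S_n$ at $O_w$ that partially resolves the singularity, extracting an irreducible exceptional curve $E$ with $E^2=-\tfrac{4n+1}{n}$ and log discrepancy $A_{S_n,\frac{1}{2}W}(E)=\tfrac{n+1}{4n+1}$. Since the local equations of $L_{xy},R_0,R_1$ have multiplicities $\tfrac{1}{4n+1},\tfrac{n}{4n+1},\tfrac{n}{4n+1}$ along $E$, one writes
\[
\sigma^*\Bigl(-\bigl(K_{S_n}+\tfrac{1}{2}W\bigr)\Bigr)=\tfrac{3}{2}\bigl(\tilde L_{xy}+\tilde R_0+\tilde R_1\bigr)+\tfrac{3(2n+1)}{2(4n+1)}E
\]
and computes the Zariski decomposition of $\sigma^*(-(K_{S_n}+\tfrac{1}{2}W))-uE$ on $\hat S_n$ for $u$ up to the pseudoeffective threshold. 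The decomposition will have several regimes: a nef regime for small $u$, followed by a regime in which $\tilde L_{xy},\tilde R_0,\tilde R_1$ enter the negative part (at the critical value $u=\tfrac{3}{4(4n+1)}$, where all three simultaneously become orthogonal to the nef part), and possibly a final regime in which $E$ moves into the negative part as well. Integrating the volume through these regimes one obtains $S_{S_n,\frac{1}{2}W}(E)$ and, for each point $p\in E$, the local term $S(W^E_{\bullet,\bullet};p)$ with the appropriate $\mathrm{ord}_p(N(u)|_E)$ contributions at the three meeting points of $\tilde L_{xy},\tilde R_0,\tilde R_1$ with $E$. Applying Theorem~\ref{theorem:Hamid-Ziquan-Kento} to the flag $p\in E\subset\hat S_n$ and using $\delta_{O_w}(S_n,\tfrac{1}{2}W)\geq\min\{\tfrac{A_{S_n,\frac{1}{2}W}(E)}{S_{S_n,\frac{1}{2}W}(E)},\min_{p\in E}\tfrac{A_{E,\Delta_E}(p)}{S(W^E_{\bullet,\bullet};p)}\}$ then yields the desired $\geq\lambda$.

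The main obstacle is the careful bookkeeping on $\hat S_n$. The partial resolution leaves one smooth point and one residual $\tfrac{1}{n}(1,n-1)$ cyclic quotient singularity on $E$, at each of which the adjunction \eqref{equation:dicr} must be used with the correct different on $E$; the three meeting points of $\tilde L_{xy},\tilde R_0,\tilde R_1$ with $E$ produce asymmetric contributions to the local $S$-terms because of the different multiplicities $\tfrac{1}{4n+1}$ versus $\tfrac{n}{4n+1}$; and the Zariski decomposition must be tracked consistently across all of its regimes. The calibration $\lambda=\tfrac{20n+5}{20n+4}$ is tight, being essentially determined by the worst point of $E$ (expected to be either the $\tfrac{1}{n}(1,n-1)$ singularity or the meeting point with $\tilde L_{xy}$), so the final numerical verification will be delicate and must hold uniformly in $n\geq 2$.
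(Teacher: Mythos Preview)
Your plan is sound and would lead to a correct proof, but it takes a genuinely different route from the paper's argument. Both you and the paper begin with the same $(1,n)$-weighted blow-up $\sigma\colon\hat S_n\to S_n$ at $O_w$ extracting the exceptional curve (call it $F$), and both compute the Zariski decomposition of $\sigma^*(-(K_{S_n}+\tfrac12 W))-tF$, obtaining $S_{S_n,\frac12 W}(F)=\tfrac{4n+3}{4(4n+1)}$. From there the two arguments diverge. The paper does \emph{not} run the full Abban--Zhuang refinement on $F$; instead it argues directly with an $m$-basis type divisor $D=bL_{xy}+b_0R_0+b_1R_1+\Lambda$, uses Lemma~\ref{lemma:Fujita} to bound $\ord_F(\sigma^*D)$ by $S_{S_n,\frac12 W}(F)+\epsilon$ (this gives $\tfrac{3n}{4n+1}+\lambda\theta<1$, so a non--log-canonical center on $\hat S_n$ must lie on $F$), and then eliminates each possible point $q\in F$ by inversion of adjunction (Lemma~\ref{lemma:inversion-of-adjunction}) combined with the a priori bounds on $b,b_0,b_1$ from Lemma~\ref{lemma:estimate-ab} and the non-negativity of $\hat\Lambda\cdot\hat R_j$.

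Your approach---computing $S(W^F_{\bullet,\bullet};p)$ at every $p\in F$ and invoking the plt-blow-up version of the Abban--Zhuang inequality---is cleaner and avoids the case analysis. Carrying it out one finds $\tfrac{A(F)}{S(F)}=\tfrac{4(n+1)}{4n+3}$, while for $p\in F$ the worst ratios are $\tfrac{2(2n+1)}{3n+1}$ at the $\tfrac1n$-point $F\cap\hat L_{xy}$ and $\tfrac{4n(2n+1)}{4n^2+3n+1}$ at $F\cap\hat R_i$; the overall minimum is $\tfrac{4(n+1)}{4n+3}$, which comfortably exceeds $\lambda=\tfrac{20n+5}{20n+4}$, so your worry about tightness is unfounded here. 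The trade-off is that you must appeal to the general (plt blow-up) form of Theorem~\ref{theorem:Hamid-Ziquan-Kento} rather than the version stated in the paper, whereas the paper's direct method stays within Lemmas~\ref{lemma:inversion-of-adjunction} and~\ref{lemma:Fujita}.
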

\begin{proof}
With a sufficiently large positive integer $m$, let $D$ be a $\mathbb{Q}$-divisor of $m$-basis type with respect to the log del Pezzo surface $(S_n, \frac{1}{2}W)$. It is enough to show that the log pair 
$$\left(S_n, \frac{1}{2}W+\lambda D\right)$$
is log canonical at $O_w$. Since the point $O_w$ is away from the curve $W$, we will prove that
$\left(S_n, \lambda D\right)$
is log canonical at $O_w$. 

Suppose that $\left(S_n, \lambda D\right)$
is not log canonical at $O_w$.  As \eqref{equation:D-ab}, we write
\[D=bL_{xy}+b_0R_0+b_1R_1+\Lambda,\]
where $b$, and $b_i$ are non-negative rational numbers and $\Lambda$ is an effective $\mathbb{Q}$-divisor whose support contains none of the curves $L_{xy}$, $R_0$, $R_1$.
By Lemma~\ref{lemma:estimate-ab},
$$b<\frac{3n+2}{2(2n+1)}; \ \  b_0, b_1 <\frac{8n^2+6n+3}{8n(2n+1)}.$$

Let $\phi:\hat{S}_n\to S_n$ be the weighted blow up at $O_w$ with weights $(1,n )$ and $F$ be its exceptional curve.
Then
\[K_{\hat{S}_n}=\phi^*(K_{S_n})-\frac{3n}{4n+1}F.\]
Denote the proper transforms of $L_{xy}$, $R_0$, $R_1$, and $\Lambda$ by $\hat{L}_{xy}$, $\hat{R}_0$, $\hat{R}_1$, and $\hat{\Lambda}$, respectively. 

The exceptional curve $F$ contains one singular point of $\hat{S}_n$, where $F$ and $\hat{L}_{xy}$ intersect. It is a cyclic quotient singularity of type $\frac{1}{n}(-1, 1)$.

We have
\[ \hat{L}_{xy}=\phi^*(L_{xy})-\frac{1}{4n+1}F, \  \  \  \hat{R}_{i}=\phi^*(R_i)-\frac{n}{4n+1}F,   \  \  \   \hat{\Lambda}=\phi^*(\Lambda)-\frac{\mu}{4n+1}F,\]
where $\mu$ is a non-negative rational number, and hence
\[K_{\hat{S}_n}+ \lambda \left(b\hat{L}_{xy}+ b_0\hat{R}_{0}+ b_1\hat{R}_{1}+\hat{\Lambda}\right)+\left(\frac{3n}{4n+1}+\lambda\theta\right)F=\phi^*(K_{S_n}+\lambda D),\]
where
\[\theta=\frac{b+n(b_0+b_1)+\mu}{4n+1}.\]

Since $F^2=-\frac{4n+1}{n}$,  we obtain
\[\hat{L}_{xy}^2=-\frac{1}{2n},\ \ \ \hat{R}_0^2=\hat{R}_1^2=-\frac{1}{2}, \ \ \ \hat{L}_{xy}\cdot\hat{R}_0=\hat{L}_{xy}\cdot\hat{R}_1=\hat{R}_0\cdot\hat{R}_1=0,\]
    \[\hat{L}_{xy}\cdot F=\frac{1}{n},\ \ \ \hat{R}_0\cdot F=\hat{R}_1\cdot F=1.\]

For the estimation of $\theta$, we first compute the volume of $\phi^*(D) - tF$.
Since  $\hat{L}_{xy}$, $\hat{R}_0$, $\hat{R}_1$ are of negative self-intersection, and 
\[
    \phi^*(D) - tF \equiv \phi^\ast\left(\frac{3}{2}C_x\right) - tF= \frac{3}{2}\left(\hat{L}_{xy}+\hat{R}_0+\hat{R}_1\right)+ \left(\frac{3(2n + 1)}{2(4n + 1)} - t\right)F,
\]
for $t>  \frac{6n+3}{8n+2}$, the divisor  $\phi^*(D) - tF$ is not pseudoeffective. Put

\[\begin{split}
    P_F(t)&=\left\{\aligned
    &\frac{3}{2}\left(\hat{L}+\hat{R}_0+\hat{R}_1\right)+\left(\frac{3(2n+1)}{2(4n+1)}-t\right)F\ \ \ \text{ for $0\leq t\leq\frac{3}{4(4n+1)}$},\\
    &\\
    &\left(\frac{3(2n+1)}{2(4n+1)}-t\right)(2\hat{L}+2\hat{R}_0+2\hat{R}_1+F)\ \ \ \text{ for $\frac{3}{4(4n+1)}\leq t\leq\frac{3(2n+1)}{2(4n+1)}$}.
    \endaligned\right.\\
    &\\
    N_F(t)&=\left\{\aligned
    &0\ \ \ \text{ for $0\leq t\leq\frac{3}{4(4n+1)}$,}\\
    &\\
    &\left(2t-\frac{3}{2(4n+1)}\right)(\hat{L}+\hat{R}_0+\hat{R}_1)\ \ \ \text{ for $\frac{3}{4(4n+1)}\leq t\leq\frac{3(2n+1)}{2(4n+1)}$}.
    \endaligned\right.
\end{split}\]
For $0\leq t\leq\frac{3}{4(4n+1)}$, 
\[
    P_F(t)\cdot\hat{L}_{xy}=\frac{1}{n}\left(\frac{3}{4(4n+1)}-t\right),\ \ P_F(t)\cdot\hat{R}_0=P_F(t)\cdot\hat{R}_1=\frac{3}{4(4n+1)}-t.
\]
For $\frac{3}{4(4n+1)}\leq t\leq\frac{3(2n+1)}{2(4n+1)}$, 
$$P_F(t)\cdot\hat{L}_{xy}=P_F(t)\cdot\hat{R}_0=P_F(t)\cdot\hat{R}_1=0.$$
Therefore, the divisor $P_F(t)$ is nef. The Zariski decomposition of $\phi^\ast\left(D\right)-tF$ is given by
\[P_F(t)+N_F(t).\]
Thus, the volume is given by
\[
    \vol(\phi^\ast(D)-tF)=\left\{\aligned
    &\frac{9(2n+1)}{8n(4n+1)}-\frac{4n+1}{n}t^2\ \ \ \text{ for $0\leq t\leq\frac{3}{4(4n+1)}$,}\\
    &\\
    &\frac{1}{n}\left(\frac{3(2n+1)}{2(4n+1)}-t\right)^2\ \ \ \text{ for $\frac{3}{4(4+1)}\leq t\leq\frac{3(2n+1)}{2(4n+1)}$},
    \endaligned\right.
\]

so the value in \eqref{equation:S} is given by
\[S_{S, \frac{1}{2}W}(F)=   \frac{1}{D^2}\int_0^{\infty} \vol(\phi^*(D) - tF) dt=\frac{4n+3}{4(4n+1)}.\]
Thus, it follows from Lemma~\ref{lemma:Fujita} that for a sufficiently small positive real number $\epsilon$
\begin{equation}\label{bound of theta}
 \frac{b+n(b_0+b_1)+\mu}{4n+1}= \theta < \frac{4n+3}{4(4n+1)} + \epsilon.
\end{equation}
It implies that
\[
    \frac{3n}{4n+1} + \lambda\theta < 1.
\]
Therefore, the log pair
\[\left( \hat{S}_n, \lambda b\hat{L}_{xy}+\lambda b_0\hat{R}_{0}+\lambda b_1\hat{R}_{1}+\lambda\hat{\Lambda}+F\right)\]
is not log canonical at some point $q$ on $F$.

We first suppose  that $q\in F\setminus \hat{L}_{xy}\cup \hat{R}_0\cup \hat{R}_1$. Then the log pair $(\hat{S}, \lambda \hat{\Lambda} + F)$ is not log canonical at the point $q$. Lemma~\ref{lemma:inversion-of-adjunction} then implies
\[
    \frac{1}{\lambda} < \hat{\Lambda}\cdot F = \frac{\mu}{n}.
\]
However, the inequality 
\[0\leq \hat{\Lambda}\cdot \hat{R}_i = \frac{3}{4(4n+1)} - \frac{b}{4n+1} + \frac{b_i(2n+1)}{2(4n+1)} - \frac{b_j n}{4n + 1} - \frac{\mu}{4n + 1}, \]
where the indices $i, j$ are determined by $b_i \leq b_j$ with $\{i, j\}=\{0,1\}$, 
yields the opposite inequality
\[
    \mu \leq \frac{3}{4} - b -b_j n + \frac{b_i(2n+1)}{2} \leq \frac{3}{4} + \frac{b_i}{2}\leq \frac{3}{4} + \frac{8n^2 + 6n + 3}{16n(2n+1)} < \frac{n}{\lambda}.
\]
Therefore, the point $q$ must be one of the intersection points  $F\cap\hat{R}_0$, $F\cap\hat{R}_1$, $F\cap\hat{L}_{xy}$.

We first consider the case when $q$ is the intersection point of $F$ and $\hat{R}_i$. Then 
the log pair $(\hat{S}, \lambda (\hat{\Lambda} + b_i\hat{R}_i) + F)$ is not log canonical at $q$. We then have
\[\frac{1}{\lambda} < (\hat{\Lambda} + b_i\hat{R}_i)\cdot F = \frac{\mu}{n} + b_i.\]

From \eqref{bound of theta}
we obtain 
\[\frac{1}{\lambda}+b_j<\frac{b}{n}+(b_0+b_1)+\frac{\mu}{n}<\frac{4n+3}{4n} + \epsilon.\]

On the other hand, from 
\begin{equation}\label{eq:R-j}
0\leq \hat{\Lambda}\cdot \hat{R}_j = \frac{3}{4(4n+1)} - \frac{b}{4n+1} + \frac{b_j(2n+1)}{2(4n+1)} - \frac{b_i n}{4n + 1} - \frac{\mu}{4n + 1}
\end{equation}
we obtain
\[
    \frac{\mu}{n} + b_i \leq \frac{3}{4n} - \frac{b}{n} + \frac{b_j(2n+1)}{2n}.
\]
Then
\[\frac{1}{\lambda}=\frac{20n+4}{20n+5}<\frac{\mu}{n} + b_i \leq \frac{3}{4n} + \frac{b_j(2n+1)}{2n},\]
and hence
\[\frac{80n^2-44n-15}{10(2n+1)(4n+1)}<b_j.\]
This yields a contradictory inequality
\[
\frac{1}{\lambda}+b_j>\frac{20n+4}{20n+5}+\frac{80n^2-44n-15}{10(2n+1)(4n+1)}=\frac{40n-7}{20n+10}.
\]
Consequently, $q$ must be the intersection point of $F$ and $\hat{L}_{xy}$, which is a singular point of type~$\frac{1}{n}(-1, 1)$.

Then the log pair $(\hat{S}, \lambda (\hat{\Lambda} + b\hat{L}_{xy}) + F)$ is not log canonical at $q$. We then obtain
\[\frac{1}{n\lambda} < (\hat{\Lambda} + b\hat{L}_{xy})\cdot F = \frac{\mu}{n} + \frac{b}{n}\]
from Lemma~\ref{lemma:inversion-of-adjunction}.
Meanwhile, if $b_j\leq b_i$,  we use  \eqref{eq:R-j} to obtain 
\[
    4(b + \mu)-3 \leq b_0 + b_1.
\]
Together with  \eqref{bound of theta}
this implies that
\[
    \frac{b}{4n + 1} + \frac{n}{4n + 1}(4(b + \mu) - 3) + \frac{\mu}{4n + 1} = (b + \mu) - \frac{3n}{4n + 1}< \frac{4n + 3}{4(4n + 1)}+ \epsilon.
\]
Thus
\[
    \frac{20n+4}{20n+5}=\frac{1}{\lambda} < b + \mu \leq \frac{16n + 3}{16n + 4}+ \epsilon.
\]
This is absurd. 

Therefore, we may conclude that
the log pair $\left(S_n, \frac{1}{2}W+\lambda D\right)$ is log canonical at $O_w$.
\end{proof}

\begin{proof}[Proof of Theorem~\ref{theorem:KE-del}]
Theorems~\ref{theorem:smooth}, \ref{theorem:singular}, and~\ref{theorem:singular-w}
immediately imply 
\[\delta\left(S_n,\frac{1}{2}W\right)>1.\]
Then Theorems~\ref{theorem:delta} and~\ref{theorem:delta-KE} complete the proof.
\end{proof}

\medskip

\textbf{Acknowledgements.}
D.~Jeong and J.~Park have been supported by IBS-R003-D1, Institute for Basic Science in Korea. 
I.~Kim and J.~Won were supported by NRF grant funded by the Korea government(MSIT) (I.~Kim: NRF-2020R1A2C4002510,
J.~Won: NRF-2020R1A2C1A01008018).

\end{document}